\newcommand{\Ext}{\operatorname{Ext}}
\newcommand{\rep}{\operatorname{rep}}
\newcommand{\per}{\operatorname{\mathbf{per}}}
\newcommand{\Hom}{\operatorname{Hom}}
\newcommand{\Iso}{\operatorname{Iso}}
\newcommand{\qis}{\operatorname{qis}}
\newtheorem{theorem}{Theorem}[section]
\newtheorem{corollary}[theorem]{Corollary}
\newtheorem{lemma}[theorem]{Lemma}
\newtheorem{proposition}[theorem]{Proposition}
\theoremstyle{definition}
\newtheorem{definition}[theorem]{Definition}
\newtheorem{example}[theorem]{Example}
\newtheorem{remark}[theorem]{Remark}
\title{Semi-derived and derived Hall algebras for stable categories}
\author{Mikhail Gorsky} 
\address{Universit\'e Paris Diderot -- Paris 7, UFR de
Math\'ematiques, Case 7012, Institut de Math\'ematiques de Jussieu -- Paris Rive Gauche, UMR 7586 du CNRS,
B\^at. Sophie Germain, 75205 Paris Cedex 13, France}
\address{Steklov Mathematical Institute,  
8 Gubkina Street, Moscow, Russia 119991.}
\email{mikhail.gorsky@imj-prg.fr}
\begin{document}

\begin{abstract}
Given a Frobenius category $\mathcal{F}$ satisfying certain finiteness conditions, we consider the localization of its Hall algebra $\mathcal{H(F)}$ at the classes of all projective-injective objects. We call it the {\it ``semi-derived Hall algebra''} $\mathcal{SDH(F, P(F))}.$ We discuss its functoriality properties and show that it is a free module over a twisted group algebra of the Grothendieck group $K_0(\mathcal{P(F)})$ of the full subcategory of projective-injective objects, with a basis parametrized by the isomorphism classes of objects in the stable category $\underline{\mathcal{F}}$. We prove that it is isomorphic to an appropriately twisted tensor product of $\mathbb{Q}K_0(\mathcal{P(F)})$ with the derived Hall algebra (in the sense of To\"{e}n and Xiao-Xu) of $\underline{\mathcal{F}},$ when both of them are well-defined. We discuss some situations where the semi-derived Hall algebra is defined while the derived Hall algebra is not. The main example is the case of $2-$periodic derived category of an abelian category with enough projectives, where the semi-derived Hall algebra was first considered by Bridgeland \cite{Br} who used it to categorify quantum groups. 
\end{abstract}

\maketitle

\section{Introduction}

Hall algebras provide one of the first known examples of additive categorification. They first appeared in works of Steinitz \cite{St} and Hall \cite{Hal} on commutative finite $p-$groups. Later, they reappeared in the work of Ringel \cite{R1} on quantum groups. He introduced the notion of the Hall algebra of an abelian category with finite $\Hom-$ and $\Ext^1-$spaces. This is a vector space over $\mathbb{C}$ with the basis parametrized by the isomorphism classes of objects in the category. The structure constants of the multiplication count in a natural way the (first) extensions with a fixed isomorphism class of an object in the middle. Ringel constructed an isomorphism between the twisted Hall algebra of the category of representations of a simply-laced Dynkin quiver $Q$ over the finite field $\mathbb{F}_q$ and the nilpotent part of the corresponding quantum group, specialized at the square root of $q:$ 
$$U_{\sqrt{q}}(\mathfrak{n}_+) \overset\sim\to \mathcal{H}_{tw}(\rep_{\mathbb{F}_q}(Q)).$$
Later Green \cite{Gr} generalized this result to an arbitrary quiver $Q$ by providing an isomorphism between the nilpotent part of the quantized universal enveloping algebra of the corresponding Kac-Moody algebra and the so-called ``composition'' subalgebra in $\mathcal{H}_{tw}(\rep_{\mathbb{F}_q}(Q))$ generated by the classes of simple objects. Using the Grothendieck group of the category of quiver representations, he introduced an extended version of the Hall algebra which recovers the Borel part of the quantum group. Moreover, Green [loc. cit.] constructed the comultiplication and Xiao \cite{X1} gave the antipode in this twisted extended Hall algebra that make it a self-dual Hopf algebra. Lusztig \cite{L} investigated the geometric version of a composition subalgebra in the Hall algebra $\mathcal{H}_{tw}(\rep_{\mathbb{F}_q}(Q)),$ using perverse sheaves on moduli spaces of quiver representations. This approach led him to the discovery of the canonical basis in $U_{\sqrt{q}}(\mathfrak{n}_+)$ satisfying very pleasant positivity properties. 

Results of Ringel and Green gave rise to a natural question: whether one can realize the whole quantum group $U_{\sqrt{q}}(\mathfrak{g})$ as a certain Hall algebra? It was soon understood that this algebra should be somehow associated to the $2-$periodic, or $\mathbb{Z}/2-$graded, derived category of the abelian category of quiver representations. In this hypothetical construction, two copies of $\rep_{\mathbb{F}_q}(Q)$ should provide two nilpotent parts of the quantum group, while the Cartan part should be recovered from the Grothendieck group. The obstacle was that this $2-$periodic category is not abelian, but rather triangulated. Citing Kapranov \cite{Kap}, ``a
direct mimicking of the Hall algebra construction but with exact triangles replacing exact
sequences, fails to give an associative multiplication''. In other words, the definition of Ringel should be modified in order to provide associative Hall algebras associated, in some way, to triangulated categories.

These ideas motivated several generalizations of Ringel's construction. Peng-Xiao \cite{PX} recovered Lie Kac-Moody algebras from 2-periodic derived categories and, more generally, Hall Lie algebras from 2-periodic triangulated categories. Unfortunately, in their approach, the Cartan part and the rule of its commutation with nilpotent parts appear in a quite ad hoc way. 
Hubery \cite{Hu} proved that the algebra defined in the same way as by Ringel, but for an exact category, is also unital and associative. Kapranov \cite{Kap} introduced a version of the Hall algebra for the bounded derived category of a hereditary abelian category and for its part with cohomologies concentrated in degrees 0 and 1. The latter provided a Heisenberg double of $U_{\sqrt{q}}(\mathfrak{b}_+)$ that is closely related to $U_{\sqrt{q}}(\mathfrak{g})$ but does not coincide with it; in particular, this Heisenberg double does not have a Hopf algebra structure which is a very important property of $U_{\sqrt{q}}(\mathfrak{g}).$ To\"{e}n \cite{T1} gave a construction of what he called {\it derived Hall algebras} for DG-enhanced triangulated categories satisfying certain finiteness conditions. Xiao and Xu showed that this construction provides an associative unital algebra using only the axioms of triangulated categories. Unfortunately, the finiteness conditions one imposes on a category in order to define its derived Hall algebra are quite restrictive: they do hold for bounded derived categories of $\Hom-$finite abelian or exact categories, but they are not satisfied for any periodic triangulated category. Therefore, none of this techniques can give a satisfactory construction of $U_{\sqrt{q}}(\mathfrak{g})$ as a Hall algebra of some kind.

The solution was found by Bridgeland \cite{Br}. He considered the localization of an appropriately twisted Hall algebra of the category of 2-periodic complexes with projective (in $\rep_{\mathbb{F}_q}(Q)$) components at the classes of all contractible complexes. He defined certain reduction of this localization and denoted it by $\mathcal{DH}_{red}(\rep_{\mathbb{F}_q}(Q)).$ He constructed an embedding from $U_t(\mathfrak{g})$ into $\mathcal{DH}_{red}(\mathcal{A}),$ where $\mathcal{A}$ is the category of quiver representations; it is an isomorphism exactly in the Dynkin case. He conjectured that this construction provides the Drinfeld double of the twisted extended Hall algebra $\mathcal{H}_{tw}(\mathcal{A})$ for any hereditary category $\mathcal{A}$ having enough projectives and satisfying natural finiteness conditions. This was proved by Yanagida \cite{Y}.

In this article, we give a waste generalization of Bridgeland's construction and relate it to derived Hall algebras of To\"{e}n. We first notice that the category of 2-periodic complexes with projective components is Frobenius when endowed with a natural exact structure, and its stable category is the 2-periodic homotopy category of the full subcategory $\mathcal{P(A)}$ of projectives in $\mathcal{A}.$ If $\mathcal{A}$ has enough projectives, the latter category is equivalent to the 2-periodic derived category of $\mathcal{A}.$ Thus, Bridgeland's construction can be seen from the following perspective: we have a triangulated category  $\mathcal{T} = \mathcal{D}_{\mathbb{Z}/2}(\mathcal{A})$ for which the derived Hall algebra is not defined, as the finiteness conditions are not satisfied. Then the correct substitute, in some sense, is given by the following rule: one finds a Frobenius category $\mathcal{H}_{\mathbb{Z}/2}(\mathcal{P(A)}),$ whose stable category is equivalent to $\mathcal{T},$ and for which the classical Hall algebra (as of an exact category) is well-defined. Then one takes this Hall algebra and localizes it at the classes of all projective-injective objects. 

Now we consider an arbitrary Frobenius category $\mathcal{F}$ satisfying some finiteness conditions. We define the {\it semi-derived Hall algebra} $\mathcal{SDH(F, P(F))}$ as the localization of $\mathcal{H(F)}$ at the classes of all projective-injective objects. We prove that it is functorial under fully faithful maps of Frobenius categories. We show that it is a free module over the group algebra of the Grothendieck group of the full subcategory $\mathcal{P(F)}$ of projective-injective objects twisted by the Euler form. Any choice of representatives in $\mathcal{F}$ of the isomorphism classes of objects in $\underline{\mathcal{F}}$ yields a basis of this module. Using this property, we prove that $\mathcal{SDH(F, P(F))}$ with an appropriately twisted multiplication is isomorphic to the tensor product of the derived Hall algebra of $\underline{\mathcal{F}}$ with the group algebra of $K_0(\mathcal{P(F)}),$ when the latter is well-defined. Bridgeland's construction demonstrates that in some situations $\mathcal{SDH(F, P(F))}$ is well-defined while the derived Hall algebra of $\underline{\mathcal{F}}$ and the twist are not. Triangulated categories which are equivalent to stable categories of some Frobenius categories are called {\it algebraic}. Conceptually, all the reasonable triangulated categories appearing in algebra and geometry are algebraic. 

Throughout this paper we work with categories linear over finite fields. There are numerous variations and generalizations of Hall algebras for categories linear over $\mathbb{C}.$ One can consider classical Hall algebras, their geometric versions \`{a} la Lusztig, motivic or cohomological Hall algebras. We do not deal with them in this paper. Nonetheless, we strongly believe that our approach can be generalized to some of these frameworks. As a first example of such a kind, one can consider a geometric analogue of Bridgleand's algebras for simply-laced Dynkin quivers introduced by Qin in his recent work \cite{Qin}. We leave the further investigation for future research.

The paper is organized as follows. In section 2, we recall the notions of Hall and derived Hall algebras, Euler forms and Frobenius categories. In section 3, we introduce the semi-derived Hall algebras. We prove their functoriality properties. We show that they are free over the quantum tori of projective-injectives. We prove that they are invariant under a big class of equivalences of stable categories. In section 4, we relate our semi-derived Hall algebras to the derived Hall algebras of To\"{e}n and Xiao-Xu. In section 5, we briefly discuss some examples. In section 6, we outline some directions for future work.

This is a part of my ongoing Ph.D. project at the Universit\'{e} Paris 7 under the joint supervision of Prof. Bernhard Keller and Prof. Olivier Schiffmann. I am very grateful to both of them for their support, patience and valuable comments. I am also grateful to Tom Bridgeland, Mikhail Kapranov and Fan Qin for useful conversations. The work was supported by DIM RDM-IdF of the R\'{e}gion \^{I}le-de-France.

\section{Preliminaries}

\subsection{Hall algebras}

Let $\mathcal{E}$ be an essentially small exact category (in the sense of Quillen\cite{Q}), linear over a finite field $k.$ For the basics on exact categories, we refer to \cite{Buh}\cite{Kel2} and references therein. Assume that $\mathcal{E}$ has finite morphism and (first) extension spaces:
$$|\mbox{Hom}(A, B)| < \infty, \quad  |\Ext^1 (A, B)| < \infty, \quad \forall A, B \in \mathcal{E}.$$
For a triple of objects $A, B, C \in \mathcal{E},$ we denote by $\Ext^1 (A, C)_B \subset \Ext^1 (A, C)$  the subset parameterizing extensions whose middle term is isomorphic to B. The {\it Hall}, or {\it Ringel-Hall algebra} $\mathcal{H(E)}$ is the $\mathbb{Q}-$vector space whose basis is formed by the isomorphism classes $[A]$ of objects $A$ of $\mathcal{E},$
with the multiplication given by
$$[A] \diamond [C] = \sum\limits_{B \in \mbox{\footnotesize{Iso}}(\mathcal{E})} \frac{|\Ext^1_{\mathcal{E}} (A, C)_B|}{|\mbox{Hom}_{\mathcal{E}} (A, C)|} [B].$$

The following result as well as the definition of $\mathcal{H(E)}$ is due to Ringel \cite{R1} for an abelian $\mathcal{E}$; Hubery \cite{Hu} generalized this to the case of exact $\mathcal{E}$.

\begin{theorem}
The algebra $\mathcal{H(E)}$ is associative and unital. The unit is given by $[0]$, where $0$ is the zero object of $\mathcal{E}$.
\end{theorem}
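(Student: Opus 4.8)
The unital claim is immediate: since $\Hom_{\mathcal{E}}(0,C)$ is a one-element set and $\Ext^1_{\mathcal{E}}(0,C)=0$, the only extension of $0$ by $C$ is the split one, whose middle term is $C$, so $[0]\diamond[C]=[C]$; dually $[A]\diamond[0]=[A]$. For associativity I would expand both iterated products and compare, for a fixed object $D$, the coefficient of $[D]$ in $([A]\diamond[B])\diamond[C]$ and in $[A]\diamond([B]\diamond[C])$; these are
$$L=\sum_{[E]}\frac{|\Ext^1(A,B)_E|\cdot|\Ext^1(E,C)_D|}{|\Hom(A,B)|\cdot|\Hom(E,C)|},$$
$$R=\sum_{[F]}\frac{|\Ext^1(B,C)_F|\cdot|\Ext^1(A,F)_D|}{|\Hom(B,C)|\cdot|\Hom(A,F)|},$$
both sums being finite by the standing hypotheses. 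The goal is $L=R$.

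The plan is to re-express everything through the classical ``subobject'' Hall numbers $F^{Z}_{XY}=\#\{\,Y'\subseteq Z\text{ an admissible subobject}: Y'\cong Y,\ Z/Y'\cong X\,\}$. The bridge is the Riedtmann--Peng identity $\dfrac{|\Ext^1(X,Y)_Z|}{|\Hom(X,Y)|}=F^{Z}_{XY}\cdot\dfrac{|\operatorname{Aut}X|\,|\operatorname{Aut}Y|}{|\operatorname{Aut}Z|}$, which I would prove in the exact setting by counting the set $V$ of short exact sequences $0\to Y\to Z\to X\to 0$ in two ways: fibering $V$ over the admissible subobjects of $Z$ isomorphic to $Y$ with cokernel isomorphic to $X$ gives fibers of size $|\operatorname{Aut}X|\,|\operatorname{Aut}Y|$, while fibering $V$ over the classes in $\Ext^1(X,Y)_Z$ gives fibers of size $|\operatorname{Aut}Z|/|\Hom(X,Y)|$ — the latter computed via $h\mapsto h-\operatorname{id}_Z$ exactly as in the abelian case, which goes through because an admissible monomorphism is the kernel of its cokernel. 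Substituting this identity into $L$ and $R$, the factors $|\operatorname{Aut}E|$, resp.\ $|\operatorname{Aut}F|$, cancel and the common prefactor $|\operatorname{Aut}A|\,|\operatorname{Aut}B|\,|\operatorname{Aut}C|/|\operatorname{Aut}D|$ factors out, reducing the claim to the purely combinatorial identity
$$\sum_{[E]}F^{E}_{AB}\,F^{D}_{EC}=\sum_{[F]}F^{F}_{BC}\,F^{D}_{AF}.$$

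To establish this identity I would show that both sides count the set of admissible two-step flags $D_1\subseteq D_2\subseteq D$ with $D_1\cong C$, $D_2/D_1\cong B$, $D/D_2\cong A$. For the left-hand side, summing over $[E]$ collapses to summing over admissible subobjects $D_1\subseteq D$ with $D_1\cong C$; then, by the third isomorphism theorem in exact categories (cf.\ \cite{Buh}), the admissible subobjects of $D/D_1$ isomorphic to $B$ with cokernel $A$ correspond bijectively to the admissible $D_2$ with $D_1\subseteq D_2\subseteq D$, $D_2/D_1\cong B$, $D/D_2\cong A$. For the right-hand side, summing over $[F]$ collapses to summing over admissible subobjects $D_2\subseteq D$ with $D/D_2\cong A$, and the admissible subobjects of $D_2$ isomorphic to $C$ with cokernel $B$ are exactly the admissible $D_1\subseteq D_2$ with $D_1\cong C$, $D_2/D_1\cong B$; using that composites of admissible monomorphisms are admissible, such a $D_1$ is an admissible subobject of $D$. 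Both descriptions give the same set of flags, hence $L=R$ and the algebra is associative.

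The parts that require genuine care are exactly those where no ambient abelian category is available: one must use only the Quillen axioms and their standard consequences — stability of admissible monomorphisms under composition, stability of an admissible monomorphism under pullback along an admissible epimorphism together with the resulting ``expected cokernel'' (the third isomorphism theorem), and the $3\times 3$-lemma — and one must check that the $\operatorname{Aut}X\times\operatorname{Aut}Y$-bookkeeping in the Riedtmann--Peng count is valid verbatim. I expect the main obstacle to be precisely this: verifying that every subobject and quotient appearing in the flag argument is admissible and that the correspondences are bijective, rather than any conceptual difficulty beyond Ringel's original argument.
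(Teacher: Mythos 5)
Your proposal is correct, and it is essentially the argument of the sources the paper itself cites for this theorem (Ringel for abelian $\mathcal{E}$, Hubery for exact $\mathcal{E}$): the paper gives no proof of its own, and your route --- Riedtmann--Peng reformulation in terms of Hall numbers followed by the two-step flag count, with the extra care about admissibility of subobjects and the Noether isomorphisms in the exact setting --- is exactly the standard one. No gaps beyond the details you already flag as needing verification.
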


\begin{remark}
The choice of the structure constants
$\frac{|\Ext^1_{\mathcal{E}}(A,B)_{C}|}{|\Hom_{\mathcal{E}}(A,B)|} $ is the same as in \cite{Br} and the most convenient for our calculations.
This choice is equivalent to that of the usual structure constants
$|\{ B' \subset C | B' \cong B, C/B' \cong A  \}|,$
called the {\it Hall numbers} and appearing in \cite{R1},\cite{Sch} and \cite{Hu}.
\end{remark}

\subsection{Euler form and twists}
Assume that $\mathcal{E}$ is locally homologically finite and that all higher extension spaces are finite:
$$\forall A, B \in \mathcal{E} \quad \exists p_0: \quad \Ext^p (A, B) = 0, \quad \forall p > p_0;$$
$$|\Ext^p (A, B)| < \infty, \quad \forall p \geq 0, \quad \forall A, B \in \mathcal{E}.$$
For objects $A, B \in \mathcal{E}$, we define the Euler form by the formula
$$\left\langle A, B \right\rangle := \prod_{i \in \mathbb{Z}} |\Ext^i_{\mathcal{E}}(A,B)|^{{(-1)}^i}.$$
It is well known that (thanks to the five-lemma) this form descends to a bilinear form
on the Grothendieck group $K_0(\mathcal{E})$ of $\mathcal{E}$,
denoted by the same symbol:
$$\left\langle \cdot, \cdot \right\rangle:  K_0(\mathcal{E}) \times K_0(\mathcal{E}) \to \mathbb{Q}^{\times}.$$

The {\it twisted Hall algebra} $\mathcal{H}_{tw}(\mathcal{E})$ 
is the same vector space as $\mathcal{H}(\mathcal{E})$  
with the twisted multiplication
\begin{equation}
[A] * [B] := \sqrt{\left\langle A, B \right\rangle} \cdot [A] \diamond [B], \quad \forall  A, B \in \Iso(\mathcal{E}).
\end{equation}

\subsection{Derived Hall algebras}
In \cite{T1}, To\"{e}n defined a version of Hall algebras for certain class of dg-enhanced triangulated categories. He called these objects {\it derived Hall algebras}. This work was further generalized by Xiao and Xu \cite{XX1} for triangulated categories without assumptions on the existence of a dg-enhancement. The construction is as follows.
Let $\mathcal{T}$ be an essentially small triangulated category, linear over a finite field $k.$ We also assume that $\mathcal{T}$ has finite morphism spaces. We denote the shift functor in $\mathcal{T}$ by $\Sigma.$ As usually, the space of $i-$th extensions of $X$ by $Y$ is defined as $\Ext^i_{\mathcal{T}}(X,Y) = \Hom_{\mathcal{T}}(X, \Sigma^i Y),$ for $X, Y \in \mathcal{T}$ and for $i \in \mathbb{Z}.$ 
Assume that $\mathcal{T}$ is {\it left locally homologically finite}, that is, it satisfies the following property: for each pair of objects $A, B \in \mathcal{F},$ there exists $N \in \mathbb{N},$ such that for each $i > N,$ we have
$$|\Ext^{-i} (A, B)| = 0.$$



The derived Hall algebra $\mathcal{DH}(\mathcal{T})$ is the $\mathbb{Q}-$vector space whose basis is formed by the isomorphism classes $[A]$ of objects $A$ of $\mathcal{T},$
with the multiplication defined by
$$[A] \diamond [C] = \sum\limits_{B \in \mbox{\footnotesize{Iso}}(\mathcal{T})} \frac{|\Ext^1_{\mathcal{T}} (A, C)_B|}{|\mbox{Hom}_{\mathcal{T}} (A, C)|} \prod\limits_{i > 0}|\Ext^{-i}_{\mathcal{T}}(A, C)|^{(-1)^{(i-1)}} [B].$$
Here the set $\Ext^1_{\mathcal{T}} (A, C)_B$ is defined as in the exact case. 

\begin{theorem} \cite{T1}\cite{XX1}\cite{XX2}
The algebra $\mathcal{DH(T)}$ is associative and unital. The unit is given by $[0]$, where $0$ is the zero object of $\mathcal{T}$.
\end{theorem}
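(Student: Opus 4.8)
The plan is to verify unitality directly and to reduce associativity to a single combinatorial identity which, following To\"{e}n and Xiao--Xu, realizes both iterated products as one and the same weighted count of ``length-two filtrations'' in $\mathcal{T}$.

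Unitality is immediate. Since $\Ext^i_{\mathcal{T}}(0,A)=0=\Ext^i_{\mathcal{T}}(A,0)$ for all $i$, the only distinguished triangle of the form $A\to B\to 0\to\Sigma A$ (resp. $0\to B\to A\to\Sigma 0$) up to isomorphism has $B\cong A$, and every factor $|\Hom|^{\pm1}$, $|\Ext^{-i}|^{\pm1}$ occurring in the corresponding structure constant equals $1$; hence $[0]\diamond[A]=[A]=[A]\diamond[0]$.

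For associativity, write $F^{Z}_{XY}:=\dfrac{|\Ext^1_{\mathcal{T}}(X,Y)_Z|}{|\Hom_{\mathcal{T}}(X,Y)|}\prod_{i>0}|\Ext^{-i}_{\mathcal{T}}(X,Y)|^{(-1)^{i-1}}$ for the structure constant; left local homological finiteness makes each product here finite and $\Hom$-finiteness makes every group below finite, so there are no convergence issues. Expanding $([A]\diamond[C])\diamond[E]$ and $[A]\diamond([C]\diamond[E])$, associativity amounts to
$$\sum_{[B]}F^{B}_{AC}\,F^{D}_{BE}\;=\;\sum_{[B]}F^{B}_{CE}\,F^{D}_{AB}\qquad\text{for all }A,C,E,D\in\Iso(\mathcal{T}).$$
The first step is to re-express $F^{Z}_{XY}$ in terms of the set of \emph{genuine} distinguished triangles $Y\xrightarrow{f}Z\xrightarrow{g}X\xrightarrow{h}\Sigma Y$ with prescribed vertices $Y,Z,X$, i.e. the set of such triples $(f,g,h)$. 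For fixed $h\in\Ext^1(X,Y)_Z$ the completions $(f,g)$ form a single orbit of $\operatorname{Aut}(Z)$ acting by $\beta\cdot(f,g)=(\beta f,g\beta^{-1})$, and the long exact $\Hom$-sequences of the triangle together with the five lemma identify the point-stabilizer --- and hence the number of completions --- with a quantity expressed through $|\Hom(X,Y)|$ and the groups $|\Ext^{-i}(X,Y)|$, $i>0$. This telescoping identity is the triangulated analogue of the Riedtmann--Peng formula; it is exactly what forces the correction factors $\prod_{i>0}|\Ext^{-i}(X,Y)|^{(-1)^{i-1}}$ in the definition of the multiplication. After this reformulation, up to a common normalization by orders of automorphism groups of $A,C,D,E$, the left-hand side counts pairs of triangles $(E\to D\to B\to\Sigma E,\ C\to B\to A\to\Sigma C)$ and the right-hand side counts pairs $(E\to B\to C\to\Sigma E,\ B\to D\to A\to\Sigma B)$.

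The octahedral axiom furnishes the bijection between the two sorts of pairs: given a pair of the first kind, applying the octahedral axiom to the composite $D\to B\to A$ produces an object $F\cong\Sigma^{-1}\mathrm{cone}(D\to A)$ fitting into distinguished triangles $E\to F\to C\to\Sigma E$ and $F\to D\to A\to\Sigma F$, i.e. a pair of the second kind, and the inverse construction completes the composite $E\to B\to D$ of a pair of the second kind; one checks these are mutually inverse on isomorphism classes. \textbf{The crux of the proof, and its main technical obstacle, is the cardinality bookkeeping in this last step}: one must prove that the number of octahedra over a fixed pair of ``outer'' triangles, weighted by the automorphism factors from the Riedtmann--Peng reformulation, is independent of which of the two parametrizations is used --- equivalently, that the higher negative $\Ext$-contributions generated by completing morphisms to triangles inside the octahedron cancel precisely against the correction factors built into the $F^{Z}_{XY}$. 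This is where To\"{e}n's method pays off: realizing $\mathcal{T}$ as the homotopy category of a stable dg (or simplicial) category and reading the $F^{Z}_{XY}$ as homotopy cardinalities of mapping complexes parametrizing flags, one obtains both identities simultaneously from the Segal-type compatibilities of the associated $S_\bullet$-construction, the finiteness hypotheses being precisely what guarantees that these homotopy cardinalities exist and are multiplicative along the relevant fibrations. Xiao--Xu's argument performs the same bookkeeping by hand inside the triangulated category, and is the version to follow when no dg enhancement is available.
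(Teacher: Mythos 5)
The paper does not prove this theorem: it is stated as a citation of To\"{e}n \cite{T1} and Xiao--Xu \cite{XX1}, \cite{XX2} (with the structure constants renormalized following Kontsevich--Soibelman), so there is no internal proof to compare yours against. Judged on its own terms, your proposal is a faithful and correctly organized outline of the Xiao--Xu argument: the unit computation is complete and correct, the reduction of associativity to the identity $\sum_{[B]}F^{B}_{AC}F^{D}_{BE}=\sum_{[B]}F^{B}_{CE}F^{D}_{AB}$ is right, and the two structural ingredients you name --- reinterpreting $F^{Z}_{XY}$ via counting completions of a fixed $h\in\Hom(X,\Sigma Y)$ to a distinguished triangle (a single $\operatorname{Aut}(Z)$-orbit), and matching the two kinds of pairs of triangles via the octahedral axiom applied to $D\to B\to A$ --- are exactly the right ones.

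However, as a proof it is incomplete at precisely the point you flag as the crux, and that point is the entire technical content of \cite{XX1} and \cite{XX2}. Two things are asserted but not established. First, the exact counting formula for the number of completions: the stabilizer of $(f,g)$ in $\operatorname{Aut}(Z)$ is the image of the map $s\mapsto 1+fsg$ from $\Hom(X,Y)$, and computing the order of that image requires controlling the kernel, which is where the full alternating product $\prod_{i>0}|\Ext^{-i}(X,Y)|^{(-1)^{i-1}}$ enters via a telescoping long exact sequence; without writing this identity down explicitly one cannot even verify that the correction factors in the definition of $\mathcal{DH(T)}$ are the right ones. Second, and more seriously, the octahedral axiom produces \emph{some} object $F$ and \emph{some} pair of triangles, but neither the octahedron nor $F$ is unique, so ``one checks these are mutually inverse on isomorphism classes'' is not a bijection of sets one can count along; Xiao--Xu instead count, for fixed outer data, the number of ways to fill in the octahedron and show that the two weighted counts agree, and this occupies most of their paper. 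If you intend to invoke To\"{e}n's or Xiao--Xu's computation for these steps (as the paper itself does by citing them), the sketch is acceptable as a summary; as a self-contained proof it has a genuine gap there.
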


As in the subsection 2.1, our choice of structure constants is not the one given by To\"{e}n but an equivalent and slightly renormalized one. It is due to Kontsevich-Soibelman \cite{KS}, cf. also \cite{XX2}. 

\subsection{Frobenius and stable categories}

Recall that in an exact category $\mathcal{F}$ an object $P$ is called projective, if $\Ext^1_\mathcal{F}(P,X) = 0,$ for any object $X \in \mathcal{F}.$ Injective objects are defined in the dual way. An exact category $\mathcal{F}$ is {\it Frobenius}, if it has enough projectives and enough injectives and if, moreover, an object is projective if and only if it is injective. Let $\mathcal{F}$ be a Frobenius category, $\mathcal{P(F)}$ the full subcategory of projective-injective objects in $\mathcal{F}.$ We define the {\it stable category} $\underline{\mathcal{F}}$ of $\mathcal{F}.$ The objects
of $\underline{\mathcal{F}}$ are the same as the objects of $\mathcal{F},$ and the morphism spaces $\Hom_{\underline{\mathcal{F}}}(X, Y)$ are the
morphism spaces in $\mathcal{F}$ modulo morphisms factoring through projective-injective
objects. The stable category $\underline{\mathcal{F}}$ is a triangulated category in a natural way \cite{Hap}.
The shift is inverse to the auto-equivalence of $\underline{\mathcal{F}}$ that sends an object to the kernel of its projective cover (the latter is called the relative syzygy functor). 

A triangulated category $\mathcal{T}$ is called {\it algebraic} if it is equivalent to the stable category of a Frobenius category $\mathcal{F}.$ In this case, one says also that $\mathcal{F}$ is a {\it Frobenius model} for $\mathcal{T}.$

\section{Semi-derived Hall algebras for stable categories}

Assume that a Frobenius category $\mathcal{F}$ satisfies the following conditions:

\begin{itemize}
\item[(C1)] $\mathcal{F}$ is essentially small, idempotent complete and linear over some ground field $k;$
\item[(C2)] $\mathcal{F}$ is {\it $\Hom-$finite}. That is, for each pair of objects $A, B \in \mathcal{F},$ we have
$$|\Hom (A, B)| < \infty.$$
\end{itemize}

Note that these assumptions ensure that $\mathcal{F}$ is Krull-Schmidt. Moreover, it is known that if $\mathcal{F}$ is Krull-Schmidt, then its stable category $\underline{\mathcal{F}}$ is also Krull-Schmidt. We prove the following lemma similar to \cite[2.3]{Hap}.

\begin{lemma} \label{extstable}
All extension spaces in the category $\mathcal{F}$ coincide with those in the stable category $\underline{\mathcal{F}}.$ Explicitly, for any $M, N \in \mathcal{F},$ the canonical map:
$$\Ext^p_{\mathcal{F}} (M, N) \to \Ext^p_{\underline{\mathcal{F}}} (M, N)$$
is bijective for all $p > 0.$
\end{lemma}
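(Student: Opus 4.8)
The plan is to prove the case $p = 1$ by hand and then reduce every higher $p$ to it by dimension shifting against projective-injective objects, exploiting that a Frobenius category has enough injectives.

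First I would fix $M, N \in \mathcal{F}$ and choose a conflation $0 \to N \xrightarrow{\iota} I \xrightarrow{\pi} \Sigma N \to 0$ with $I \in \mathcal{P(F)}$; by Happel's construction of the triangulated structure on $\underline{\mathcal{F}}$, the cokernel $\Sigma N$ represents the shift of $N$, and this conflation is one of the standard triangles. Applying $\Hom_{\mathcal{F}}(M, -)$ and using $\Ext^1_{\mathcal{F}}(M, I) = 0$ (injectivity of $I$) gives the exact sequence
$$\Hom_{\mathcal{F}}(M, I) \xrightarrow{\ \pi_*\ } \Hom_{\mathcal{F}}(M, \Sigma N) \longrightarrow \Ext^1_{\mathcal{F}}(M, N) \longrightarrow 0,$$
so $\Ext^1_{\mathcal{F}}(M, N) \cong \operatorname{coker}(\pi_*)$. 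On the other hand $\Ext^1_{\underline{\mathcal{F}}}(M, N) = \Hom_{\underline{\mathcal{F}}}(M, \Sigma N)$ is the quotient of $\Hom_{\mathcal{F}}(M, \Sigma N)$ by the morphisms factoring through some object of $\mathcal{P(F)}$. The crux is to identify these two quotients, i.e. to show that a map $f \colon M \to \Sigma N$ lies in $\operatorname{im}(\pi_*)$ if and only if it factors through a projective-injective. One inclusion is immediate, since $\operatorname{im}(\pi_*)$ consists of maps factoring through $I \in \mathcal{P(F)}$. Conversely, if $f = g h$ with $P \in \mathcal{P(F)}$, $h\colon M \to P$, $g\colon P \to \Sigma N$, then projectivity of $P$ together with the fact that $\pi$ is a deflation lets me lift $g$ to $\tilde g\colon P \to I$ with $\pi \tilde g = g$, whence $f = \pi(\tilde g h) \in \operatorname{im}(\pi_*)$. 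This produces a bijection $\Ext^1_{\mathcal{F}}(M,N) \cong \Ext^1_{\underline{\mathcal{F}}}(M,N)$, and one then checks that it coincides with the canonical comparison map by matching the connecting homomorphism $\Hom_{\mathcal{F}}(M,\Sigma N) \to \Ext^1_{\mathcal{F}}(M,N)$ with the passage from a conflation to its associated triangle.

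For $p \geq 2$ I would iterate the same conflation: $\Hom_{\mathcal{F}}(M, -)$ applied to $0 \to N \to I \to \Sigma N \to 0$ together with $\Ext^i_{\mathcal{F}}(M, I) = 0$ for all $i > 0$ gives natural isomorphisms $\Ext^p_{\mathcal{F}}(M, N) \cong \Ext^{p-1}_{\mathcal{F}}(M, \Sigma N) \cong \cdots \cong \Ext^1_{\mathcal{F}}(M, \Sigma^{p-1} N)$, while on the triangulated side $\Ext^p_{\underline{\mathcal{F}}}(M, N) = \Hom_{\underline{\mathcal{F}}}(M, \Sigma^p N) = \Ext^1_{\underline{\mathcal{F}}}(M, \Sigma^{p-1} N)$ by definition of the shift. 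As both reductions are natural in the second variable and compatible with the comparison map, the case $p = 1$ applied to the pair $(M, \Sigma^{p-1} N)$ completes the induction. Here the higher $\Ext$ in $\mathcal{F}$ is taken in the Yoneda sense, equivalently computed from an injective coresolution of $N$ (which exists since $\mathcal{F}$ has enough injectives); in particular no finiteness hypothesis is needed for this lemma.

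The main obstacle is less any single computation than the bookkeeping required to verify that the abelian-group isomorphisms constructed above are genuinely the canonical comparison maps $\Ext^p_{\mathcal{F}}(M,N) \to \Ext^p_{\underline{\mathcal{F}}}(M,N)$, rather than merely abstract isomorphisms. For $p=1$ this amounts to tracing through Happel's description of the triangulated structure on $\underline{\mathcal{F}}$ and checking that the connecting map $\Hom_{\mathcal{F}}(M,\Sigma N) \to \Ext^1_{\mathcal{F}}(M,N)$ is adjoint to "send a conflation to its triangle"; for $p > 1$ the compatibility then propagates by naturality of the dimension-shift isomorphisms. Exactness of the relevant long exact sequences, the vanishing $\Ext^{>0}_{\mathcal{F}}(M,I) = 0$ for $I$ injective, and the lifting step are all routine.
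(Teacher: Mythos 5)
Your argument is correct and is, at bottom, the same dimension-shifting idea as the paper's, run in the dual direction: the paper splices the syzygy conflations into a projective resolution $R(M)^{\bullet}$ of the first variable and reads off $\Ext^p_{\mathcal{F}}(M,N)$ as $\Hom_{\mathcal{F}}(\Sigma^{-p}M,N)$ modulo maps factoring through projective-injectives, whereas you coresolve the second variable by injectives and reduce $p>1$ to $p=1$. The one place where your write-up is actually more careful than the paper's is the identification of $\operatorname{im}(\pi_*)$ with the set of \emph{all} maps $M \to \Sigma N$ factoring through \emph{some} projective-injective object: the lifting of $g\colon P \to \Sigma N$ through the deflation $\pi$ is exactly the step the paper passes over with ``by definition,'' and it is needed in either version of the proof. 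The remaining bookkeeping you defer (that the composite isomorphism is the canonical comparison map, and that $\Ext^{i}_{\mathcal{F}}(M,I)=0$ for all $i>0$ and not just $i=1$) is genuinely routine in a Frobenius category, so there is no gap.
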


\begin{proof}
By definition of the triangulated structure on the stable category $\underline{\mathcal{F}},$ we have a family of conflations
$$\Sigma^{-i} M \rightarrowtail P(\Sigma^{-i-1}M) \twoheadrightarrow \Sigma^{-i-1}M, \quad\quad i \in \mathbb{Z},$$
where $P(\Sigma^{-i-1}M)$ belongs to $\mathcal{P(F)}, \quad \Sigma$ is the suspension functor in $\underline{\mathcal{F}}.$ Thus, the complex
$$R(M)^{\bullet} = \ldots \to P(\Sigma^{-2}M) \to P(\Sigma^{-1}M) \to P(M)  \to 0$$
is a projective resolution of $M$ in $\mathcal{F}.$ Therefore, extensions of $M$ by $N$ are exactly the homologies of the complex $\Hom_{\mathcal{F}}(R(M)^{\bullet},N),$ i.e. $\Ext^p_{\mathcal{F}} (M, N)$ is the quotient of the set of morphisms $\Hom_{\mathcal{F}}({\Sigma^{-(p)} M}, N)$ by the subset of morphisms which factor through $P(\Sigma^{-p-1}M).$ This last subset is, by definition, the full subset of $\Hom_{\mathcal{F}}({\Sigma^{-(p)} M},N)$ containing morphisms which factor through a projective-injective object. Thus, we have an isomorphism 
$$\Ext^p_{\mathcal{F}} (M, N) \overset\sim\to \Hom_{\underline{\mathcal{F}}}({\Sigma^{-(p)} M},N).$$
The right hand side is nothing but $\Hom_{\underline{\mathcal{F}}}({M}, \Sigma^{p}  N),$ that is equal to $\Ext^p_{\underline{\mathcal{F}}}(M,  N).$
\end{proof}

\begin{corollary}
If a Frobenius category $\mathcal{F}$ satisfies condition (C2), it is also $\Ext^p-$finite, for any $p > 0.$ That is, for any $M, N \in \mathcal{F}$ and for any $p > 0,$ we have 
$$|\Ext^p_{\mathcal{F}}(M,N)| < \infty.$$
\end{corollary}

\begin{proof}
By (the proof of) Lemma \ref{extstable}, we know that the set
$\Ext^p_{\mathcal{F}} (M, N)$ is a subquotient of the set $\Hom_{\mathcal{F}}({\Sigma^{-(p)} M},N).$ The latter being finite, so is the former.
\end{proof}

It follows that the classical Hall algebra $\mathcal{H(C)}$ of the exact category $\mathcal{F}$ satisfying conditions (C1) and (C2) is well-defined.

Following \cite{Schl06}, we define {\it a map between Frobenius categories} to be an exact functor between them sending projective-injective objects to projective-injective ones. 
Such a map $F: \mathcal{F}' \to \mathcal{F}$ induces an exact functor between the stable categories $\underline{F}: \underline{\mathcal{F}'} \to \underline{\mathcal{F}}$ \cite[2.6]{Hap}. If $F$ is fully faithful then $\underline{F}$ is fully faithful as well, see, e.g., \cite[Remark~7]{Kuen13}. Following \cite{Sch}, we say that an exact functor $G: \mathcal{A} \to \mathcal{B}$ between exact categories is {\it extremely faithful}, if it induces isomorphisms $\Ext^i(M,N) \overset\sim\to \Ext^i(G(M), G(N)),$ for all $M, N \in \mathcal{A}$ and all $i \geq 0.$
By Lemma \ref{extstable}, extensions in Frobenius categories are certain morphisms in their stable categories. Therefore, we can make the following observation. 

\begin{lemma} \label{extreme}
Any fully faithful map $F: \mathcal{F}' \to \mathcal{F}$ between Frobenius categories is extremely faithful.
\end{lemma}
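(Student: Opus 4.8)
The plan is to deduce the statement from Lemma \ref{extstable} together with the fact, recalled just above, that the induced functor $\underline{F}: \underline{\mathcal{F}'}\to\underline{\mathcal{F}}$ is again fully faithful and is a triangle functor (in particular it intertwines the suspensions $\Sigma$). The case $i=0$ is immediate, since $\Ext^0=\Hom$ and $F$ is fully faithful by hypothesis; so fix $i>0$ and objects $M,N$ of $\mathcal{F}'$.

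By Lemma \ref{extstable} there are natural identifications $\Ext^i_{\mathcal{F}'}(M,N)\cong\Hom_{\underline{\mathcal{F}'}}(\Sigma^{-i}M,N)$ and $\Ext^i_{\mathcal{F}}(F(M),F(N))\cong\Hom_{\underline{\mathcal{F}}}(\Sigma^{-i}\underline{F}(M),\underline{F}(N))$. Full faithfulness of $\underline{F}$ gives a bijection $\Hom_{\underline{\mathcal{F}'}}(\Sigma^{-i}M,N)\overset\sim\to\Hom_{\underline{\mathcal{F}}}(\underline{F}(\Sigma^{-i}M),\underline{F}(N))$, and since $\underline{F}$ is a triangle functor the target is canonically $\Hom_{\underline{\mathcal{F}}}(\Sigma^{-i}\underline{F}(M),\underline{F}(N))$. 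Composing these produces the desired bijection $\Ext^i_{\mathcal{F}'}(M,N)\overset\sim\to\Ext^i_{\mathcal{F}}(F(M),F(N))$.

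The only point requiring care is to verify that the bijection so obtained coincides with the canonical map induced by $F$ on Yoneda $\Ext$-groups. I would handle this by organizing the argument at the level of resolutions, which also treats $i\geq 0$ uniformly: recall from the proof of Lemma \ref{extstable} the projective resolution $R(M)^{\bullet}$ of $M$ in $\mathcal{F}'$ built from the syzygy conflations; since $F$ is exact and sends projective-injectives to projective-injectives, splicing the $F$-images of those conflations shows that $F(R(M)^{\bullet})$ is a projective resolution of $F(M)$ in $\mathcal{F}$. Full faithfulness of $F$ then yields, degreewise and compatibly with the differentials (because $F$ is a functor), an isomorphism of cochain complexes $\Hom_{\mathcal{F}'}(R(M)^{\bullet},N)\overset\sim\to\Hom_{\mathcal{F}}(F(R(M)^{\bullet}),F(N))$; passing to cohomology gives $\Ext^i_{\mathcal{F}'}(M,N)\overset\sim\to\Ext^i_{\mathcal{F}}(F(M),F(N))$ for all $i\geq 0$, and inspecting this map shows it is the canonical one. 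I expect the main (if modest) obstacle to be exactly this last step: checking that splicing the $F$-images of the conflations really produces an acyclic complex of projectives resolving $F(M)$ in the exact-category sense, and that the resulting map on cohomology is the canonical $\Ext$-map rather than merely some isomorphism; everything else is formal.
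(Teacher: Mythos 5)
Your proof is correct and follows essentially the same route as the paper, which offers only the one-sentence justification preceding the lemma (extensions are morphisms in the stable categories, and a fully faithful map induces a fully faithful triangle functor $\underline{F}$); your argument is a careful fleshing-out of exactly that remark, and the resolution-based verification that the bijection is the canonical one is a sound (and welcome) addition.
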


Let $F: \mathcal{F}' \to \mathcal{F}$ be an exact functor. It induces a natural linear map of vector spaces 
$$F_{\ast}: \mathcal{H}(\mathcal{F}') \to \mathcal{H(F)},\quad  [M] \mapsto [F(M)].$$
In general, it is not a morphism of algebras. Nonetheless, by \cite[Corollary 1.16]{Sch} and Lemma \ref{extreme}, we get the following result about the functoriality of Hall algebras.

\begin{corollary} \label{functfrob}
If $F: \mathcal{F}' \to \mathcal{F}$ is a fully faithful map between essentially small $\Hom-$finite Frobenius categories, then $F_{\ast}$ is an embedding of algebras.
\end{corollary}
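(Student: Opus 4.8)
The plan is to deduce the statement by combining Lemma~\ref{extreme} with the functoriality statement \cite[Corollary~1.16]{Sch} for Hall algebras of exact categories, and to supplement it with the elementary fact that a fully faithful functor reflects isomorphisms.

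First I would verify that $F_{\ast}$ is a homomorphism of algebras. By Lemma~\ref{extreme}, $F$ is extremely faithful, so for all $A, C \in \mathcal{F}'$ the maps $\Hom_{\mathcal{F}'}(A,C) \to \Hom_{\mathcal{F}}(F(A),F(C))$ and $\Ext^1_{\mathcal{F}'}(A,C) \to \Ext^1_{\mathcal{F}}(F(A),F(C))$ are bijections. Being exact and fully faithful, $F$ carries a conflation representing a class $\xi \in \Ext^1_{\mathcal{F}'}(A,C)$ to a conflation representing $F(\xi)$, whose middle term is $F(E)$ whenever $E$ is the middle term of $\xi$. Since $F$ is fully faithful it reflects isomorphisms, so $F(E) \cong F(B)$ if and only if $E \cong B$; combined with the bijectivity of the $\Ext^1$-map, this shows that for every $B \in \mathcal{F}'$ the map restricts to a bijection $\Ext^1_{\mathcal{F}'}(A,C)_B \overset{\sim}{\to} \Ext^1_{\mathcal{F}}(F(A),F(C))_{F(B)}$, while $\Ext^1_{\mathcal{F}}(F(A),F(C))_{B'} = \emptyset$ for any $B' \in \Iso(\mathcal{F})$ not isomorphic to an object in the essential image of $F$. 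Plugging these equalities of cardinalities into the definition of the Hall product, and using once more that the classes $[F(B)]$ for pairwise non-isomorphic $B$ are distinct basis elements of $\mathcal{H}(\mathcal{F})$, gives $F_{\ast}([A]\diamond[C]) = [F(A)]\diamond[F(C)]$; this is exactly the content of \cite[Corollary~1.16]{Sch}. Since also $F_{\ast}([0]) = [0]$, the map $F_{\ast}$ is a homomorphism of unital algebras.

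It then remains to see that $F_{\ast}$ is injective. The set $\{[M] : M \in \Iso(\mathcal{F}')\}$ is a $\mathbb{Q}$-basis of $\mathcal{H}(\mathcal{F}')$, and $F_{\ast}$ sends it to $\{[F(M)] : M \in \Iso(\mathcal{F}')\} \subseteq \{[B'] : B' \in \Iso(\mathcal{F})\}$. Because $F$ reflects isomorphisms, $[M] \neq [N]$ in $\mathcal{H}(\mathcal{F}')$ implies $[F(M)] \neq [F(N)]$ in $\mathcal{H}(\mathcal{F})$, so $F_{\ast}$ maps a basis injectively onto a subset of a basis; a linear map with this property is injective. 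Hence $F_{\ast}$ is an embedding of algebras.

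I do not expect a serious obstacle here. Once Lemma~\ref{extreme} is granted, the only point requiring a little care is the compatibility of the $\Ext^1$-bijection with the stratification by the isomorphism type of the middle term (including the bookkeeping for middle terms lying outside the essential image of $F$), and this is settled by the reflection-of-isomorphisms property of a fully faithful exact functor. In effect, the corollary is a direct application of \cite[Corollary~1.16]{Sch}, whose hypotheses — extreme faithfulness and injectivity on isomorphism classes — are supplied respectively by Lemma~\ref{extreme} and by full faithfulness.
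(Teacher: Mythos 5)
Your proposal is correct and follows exactly the paper's route: the paper's entire proof is the observation that Lemma~\ref{extreme} supplies the extreme faithfulness hypothesis of \cite[Corollary~1.16]{Sch}, which then yields the embedding. You have merely unpacked the content of that cited corollary (the compatibility of the $\Ext^1$-bijection with the stratification by middle terms, and injectivity via reflection of isomorphisms), which is a sound and welcome elaboration but not a different argument.
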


Since the exact structure on the category $\mathcal{P(F)}$ of projective-injectives in $\mathcal{F}$ splits, the Euler form on $K_0(\mathcal{P(F)}) = K_0^{split}(\mathcal{P(F)})$ is well-defined and is given on classes of objects simply as the cardinality of morphism spaces:

$$\left\langle\cdot,\cdot\right\rangle: K_0(\mathcal{P(F)}) \times K_0(\mathcal{P(F)}) \to \mathbb{Q}^{\times}, \quad \left\langle A, B \right\rangle = |\Hom(A, B)|, \qquad \mbox{for} \quad A, B \in \mathcal{P(F)}.$$

Similarly, the Euler form is well-defined and given by the same formula on $K_0(\mathcal{P(F)}) \times K_0(\mathcal{F})$ and on $K_0(\mathcal{F}) \times K_0(\mathcal{P(F)}).$
We define the {\it quantum torus of projective-injectives} $\mathbb{T}(\mathcal{P(F)})$ as the group algebra of $K_0(\mathcal{P(F)})$ with the multiplication twisted by the Euler form.
For any $P \in \mathcal{P(F)}, C \in \mathcal{F},$  their products in the Hall algebra $\mathcal{H(F)}$ take very simple form: 
$$[P] \diamond [C] = \frac{1}{|\Hom(P, C)|} [P \oplus C] = \frac{1}{\left\langle P, C  \right\rangle}[P \oplus C];$$
$$[C] \diamond [P] = \frac{1}{|\Hom(C, P)|} [P \oplus C] = \frac{1}{\left\langle C, P  \right\rangle}[P \oplus C].$$
It follows that the set of all classes of the form $[P],$ for $P \in \mathcal{P(F)},$ satisfies the Ore conditions in this algebra. Therefore, we can localize $\mathcal{H(F)}$ at these classes.

\begin{definition}
The {\it semi-derived Hall algebra of the pair} $(\mathcal{F}, \mathcal{P(F)})$ is the localization of $\mathcal{H(C)}$ at the classes of all projective-injective objects:
$$\mathcal{SDH(F, P(F))} := \mathcal{H(F)}[[P]^{-1}| P \in \mathcal{P(F)}].$$
\end{definition}

By definition, $\mathcal{SDH(F, P(F))}$ is an associative unital algebra, where the unit is given by $[0]$, $0$ being the zero object of $\mathcal{F}$. Moreover, by its definition, it satisfies the following functoriality property.

\begin{proposition}
If $F: \mathcal{F}' \to \mathcal{F}$ is a fully faithful map between essentially small $\Hom-$finite Frobenius categories, then $F_{\ast}$ induces an embedding of algebras $\mathcal{SDH(F', P(F'))} \hookrightarrow \mathcal{SDH(F, P(F))}.$
\end{proposition}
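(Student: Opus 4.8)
The plan is to build the map $\widetilde{F_\ast}$ via the universal property of Ore localization and then to deduce injectivity by writing elements as left fractions and falling back on the embedding $F_\ast\colon\mathcal{H(F')}\hookrightarrow\mathcal{H(F)}$ of Corollary~\ref{functfrob}.

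First I would note that, $F$ being an exact functor that sends projective-injectives to projective-injectives, it carries $\mathcal{P(F')}$ into $\mathcal{P(F)}$; hence the algebra embedding $F_\ast$ sends the set $S'=\{[P']\mid P'\in\mathcal{P(F')}\}$ into $S=\{[P]\mid P\in\mathcal{P(F)}\}$. Composing $F_\ast$ with the localization morphism $\mathcal{H(F)}\to\mathcal{SDH(F, P(F))}$ yields an algebra homomorphism under which every class in $S'$ becomes invertible, so by the universal property of the (say, left) Ore localization $\mathcal{SDH(F', P(F'))}=\mathcal{H(F')}[S'^{-1}]$ it factors uniquely through a homomorphism $\widetilde{F_\ast}$ with $\widetilde{F_\ast}([M])=[F(M)]$. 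The fact that $S$, and likewise $S'$, is at the same time a left and a right Ore set is exactly the symmetry $[P]\diamond[C]\propto[P\oplus C]\propto[C]\diamond[P]$ recorded above, so the choice of side is immaterial.

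For injectivity I would first observe that every $[P]$ with $P\in\mathcal{P(F)}$ is a non-zero-divisor in $\mathcal{H(F)}$: from $[P]\diamond[B]=\langle P,B\rangle^{-1}[P\oplus B]$, Krull--Schmidt cancellation in $\mathcal{F}$ makes $[B]\mapsto[P\oplus B]$ injective on the basis of isomorphism classes, while the scalars $\langle P,B\rangle=|\Hom(P,B)|$ are units in $\mathbb{Q}$, so $[P]\diamond(-)$ and symmetrically $(-)\diamond[P]$ are injective linear maps; the same holds in $\mathcal{H(F')}$. Hence the localization morphisms $\mathcal{H(F)}\hookrightarrow\mathcal{SDH(F, P(F))}$ and $\mathcal{H(F')}\hookrightarrow\mathcal{SDH(F', P(F'))}$ are injective. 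Now take $x\in\ker\widetilde{F_\ast}$ and write it as a left fraction $x=[P']^{-1}\diamond h'$ with $P'\in\mathcal{P(F')}$ and $h'\in\mathcal{H(F')}$; applying $\widetilde{F_\ast}$ gives $[F(P')]^{-1}\diamond F_\ast(h')=0$, and multiplying by the invertible element $[F(P')]$ forces $F_\ast(h')=0$ in $\mathcal{SDH(F, P(F))}$, hence $F_\ast(h')=0$ in $\mathcal{H(F)}$ by the embedding just noted, hence $h'=0$ by Corollary~\ref{functfrob}, whence $x=0$. So $\widetilde{F_\ast}$ is the desired embedding.

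I expect the only non-formal point to be the non-zero-divisor claim for the classes $[P]$ --- that is, the Krull--Schmidt cancellation argument --- since, once $S'$ and $S$ are known to be Ore sets consisting of non-zero-divisors, the existence of $\widetilde{F_\ast}$, the reduction to left fractions and the return to $\mathcal{H}$ are all formal consequences of the Ore calculus.
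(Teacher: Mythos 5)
Your proof is correct, and it follows the route the paper intends: the paper itself offers no argument beyond ``by its definition'' together with Corollary~\ref{functfrob}, and your combination of the universal property of the Ore localization with that corollary is exactly the implicit reasoning. The one genuinely non-formal point you rightly isolate and settle --- that each $[P]$ is a non-zero-divisor (via Krull--Schmidt cancellation of the summand $P$), so that $\mathcal{H(F)}\to\mathcal{SDH(F,P(F))}$ and its primed analogue are injective --- is precisely the detail the paper omits, and your left-fraction argument then closes the injectivity of $\widetilde{F_\ast}$ cleanly.
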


We have  natural left and right actions of $\mathbb{T}(\mathcal{P(F)})$ on $\mathcal{SDH(F, P(F))}$ given by the Hall product. Let us denote by $\mathcal{M(F)}$ this bimodule structure on $\mathcal{SDH(F, P(F))}.$

\begin{theorem} \label{frobfree}
Assume that $\mathcal{F}$ satisfies conditions (C1) and (C2). Then $\mathcal{M(F)}$ is a free right (resp. left) module over $\mathbb{T}(\mathcal{P(F)}).$ Each choice of representatives in $\mathcal{F}$ of the isomorphism classes of the stable category $\underline{\mathcal{F}}$ yields a basis.
\end{theorem}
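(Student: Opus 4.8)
The plan is to produce an explicit spanning set indexed by representatives of the stable isomorphism classes, and then to prove linear independence by a filtration/leading-term argument. First I would fix, once and for all, a set $\{M_\alpha\}$ of representatives in $\mathcal{F}$ of the isomorphism classes of $\underline{\mathcal{F}}$; by Krull-Schmidt we may take each $M_\alpha$ to have no nonzero projective-injective direct summand. I claim every class $[N]$ in $\mathcal{H}(\mathcal{F})$ can be written, after multiplying by a suitable $[P]$ with $P\in\mathcal{P(F)}$, as a scalar multiple of $[M_\alpha\oplus P']$ for appropriate $P'\in\mathcal{P(F)}$. Indeed, writing $N\cong M_\alpha\oplus P_N$ with $P_N\in\mathcal{P(F)}$ (Krull-Schmidt again), the simple formulas recalled just before the Definition give $[P_N]\diamond[M_\alpha]=\frac{1}{\langle P_N,M_\alpha\rangle}[N]$, hence in the localization $[N]=\langle P_N,M_\alpha\rangle\,[P_N]\diamond[M_\alpha]=\langle P_N,M_\alpha\rangle\,[P_N]\cdot[M_\alpha]$ where $[P_N]$ is an invertible element identified with a generator of $\mathbb{T}(\mathcal{P(F)})$. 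So the $[M_\alpha]$ generate $\mathcal{M(F)}$ as a right (and symmetrically left) $\mathbb{T}(\mathcal{P(F)})$-module. One must check that the assignment $P\mapsto[P]$ really does identify $\mathbb{T}(\mathcal{P(F)})$ with the subalgebra of $\mathcal{SDH(F,P(F))}$ generated by the invertible classes: this uses precisely the two displayed product formulas, which show $[P]\diamond[Q]=\frac{1}{\langle P,Q\rangle}[P\oplus Q]$, matching the Euler-form twist defining $\mathbb{T}(\mathcal{P(F)})$, and that these classes are central up to the twist only among themselves — actually they are genuinely invertible and multiply as the twisted group algebra of $K_0^{split}(\mathcal{P(F)})$, so the map is an algebra isomorphism onto its image.

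For linear independence, suppose a finite relation $\sum_\alpha [M_\alpha]\cdot t_\alpha=0$ holds in $\mathcal{SDH(F,P(F))}$ with $t_\alpha\in\mathbb{T}(\mathcal{P(F)})$ not all zero. Clearing denominators (multiply on the right by a single $[P]$ dominating all the $t_\alpha$), we may assume the relation lives in $\mathcal{H}(\mathcal{F})$ itself and reads $\sum_{\alpha,\beta} c_{\alpha\beta}\,[M_\alpha\oplus P_\beta]=0$ for finitely many projective-injectives $P_\beta$ and scalars $c_{\alpha\beta}\in\mathbb{Q}$. Since $\mathcal{H}(\mathcal{F})$ has the isomorphism classes $[X]$ of objects of $\mathcal{F}$ as a $\mathbb{Q}$-basis, this forces every coefficient $c_{\alpha\beta}=0$ provided the objects $M_\alpha\oplus P_\beta$ are pairwise non-isomorphic for distinct pairs $(\alpha,\beta)$. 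This is where Krull-Schmidt does the real work: if $M_\alpha\oplus P_\beta\cong M_{\alpha'}\oplus P_{\beta'}$ then, comparing the non-projective parts of the Krull-Schmidt decompositions, $M_\alpha\cong M_{\alpha'}$ in $\mathcal{F}$, hence $\alpha=\alpha'$, and then $P_\beta\cong P_{\beta'}$; so distinct pairs give distinct classes and the coefficients vanish. Unwinding what "clearing denominators" did, one recovers that the original $t_\alpha$ were zero, contradicting the assumption.

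The left-module statement is proved by the mirror-image argument, using $[C]\diamond[P]=\frac{1}{\langle C,P\rangle}[P\oplus C]$ in place of the other formula; no new ideas are needed.

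The main obstacle I anticipate is not any single step but the bookkeeping around the localization: one must be careful that the Ore localization of $\mathcal{H}(\mathcal{F})$ at the multiplicative set generated by $\{[P]:P\in\mathcal{P(F)}\}$ is well behaved enough that "clear denominators, then read off coefficients in $\mathcal{H}(\mathcal{F})$" is legitimate — i.e. that the natural map $\mathcal{H}(\mathcal{F})\to\mathcal{SDH(F,P(F))}$ is injective and that every element of the localization is uniquely of the form $h\cdot[P]^{-1}$. The Ore conditions were already verified before the Definition, and injectivity follows because the localizing set consists of non-zero-divisors (the product formulas show $[P]\diamond[X]$ is always a nonzero multiple of a basis element, and distinct $X$ with $X$ having fixed projective part give distinct $P\oplus X$), so this is a matter of assembling standard facts about Ore localization rather than a genuine difficulty. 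A secondary point to handle cleanly is the identification $K_0(\mathcal{P(F)})=K_0^{split}(\mathcal{P(F)})$ and that $[P\oplus Q]$ corresponds to $[P]+[Q]$ in $K_0$, so that the free basis is honestly indexed by $\underline{\mathcal{F}}$-iso-classes and the module is free rather than merely generated.
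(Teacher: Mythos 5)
Your proof is correct and follows essentially the same route as the paper's: generation comes from the fact that stably isomorphic objects differ by projective-injective direct summands, and independence from Krull--Schmidt cancellation of those summands, which the paper packages as the injectivity of $K_0(\mathcal{P(F)})\to K_0^{split}(\mathcal{F})$ (Lemma \ref{k0splitinj}, proved by the same ``take the projective part'' retraction you use) while you clear denominators and read off coefficients in the Hall basis directly. The one step you state a bit loosely is $N\cong M_\alpha\oplus P_N$: this needs the standard fact that a stable isomorphism becomes an actual isomorphism after adding projective-injective summands (using idempotent completeness from (C1)), not Krull--Schmidt alone — the paper's chain-of-conflations argument plays exactly this role.
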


\begin{proof}
Assume that the images in $\underline{\mathcal{F}}$ of two objects $M, M'$ from $\mathcal{F}$ are isomorphic. Since the image $\underline{\mathcal{P(F)}}$ of $\mathcal{P(F)}$ in $\underline{\mathcal{F}}$ is contained in the isomorphism class of $0,$ we have 
$$\underline{\mathcal{F}} = \underline{\mathcal{F}}/\underline{\mathcal{P(F)}}.$$
This means that there is a sequence of objects $M_0 = M, M_1, M_2,\ldots, M_n = M'$ in $\mathcal{F},$ such that for each $i = 1, 2, \ldots, n$ there is either a conflation 
$$P \rightarrowtail M_{i-1} \stackrel{\qis}{\twoheadrightarrow} M_i,$$
or a conflation
$$P \rightarrowtail M_i \stackrel{\qis}{\twoheadrightarrow} M_{i-1},$$
with $P$ projective. Therefore, we either have
$$[M_i] = [P \oplus M_{i-1}] = |\Hom(P, M_{i-1})| [P] \diamond [M_{i-1}],$$
or
$$[M_{i-1}] = [P \oplus M_i] = |\Hom(K, M_i)| [K] \diamond [M_i] \Rightarrow [M_i] = \frac{1}{|\Hom(K, M_i)|} [K]^{-1} \diamond [M_{i-1}].$$
It follows that $[M'] \in \mathbb{T}(\mathcal{P(F)}) \diamond [M].$ Therefore, (the representatives of) the isomorphism classes in the stable category $\underline{\mathcal{F}}$ generate $\mathcal{M}(\mathcal{F})$ over $\mathbb{T}(\mathcal{P(F)}).$ It remains to prove that they are independent over this quantum torus.

One can decompose $\mathcal{M(F)}$ into the direct sum 
$$\mathcal{M(F)} = \bigoplus\limits_{\alpha \in \Iso(\underline{\mathcal{F}})} \mathcal{M}_{\alpha} \mathcal{(F)},$$
where $\mathcal{M}_{\alpha} \mathcal{(F)}$ is the component containing the classes of all objects whose isomorphism class in $\underline{\mathcal{F}}$ is $\alpha.$ We claim that for each $\alpha,$ the $\mathbb{T}(\mathcal{P(F)})-$submodule $\mathcal{M}_{\alpha}(\mathcal{F})$ is free of rank one. Let $M$ be an object of $\mathcal{F}.$ By the above argument, the map
\begin{equation} \label{maptm}
\mathbb{T}(\mathcal{P(F)}) \to \mathcal{M}_{[M]}(\mathcal{F}), \quad [K] \mapsto [K] \diamond [M]
\end{equation}
is surjective. Since $\mathbb{T}(\mathcal{P(F)})$ is the (twisted) group algebra of $K_0(\mathcal{P(F)}),$  Lemma \ref{k0splitinj} below shows that its composition with the natural map 
$$\mathcal{M}_{[M]}(\mathcal{F}) \to \mathcal{M}(\mathcal{F}) \to \mathbb{Q}[K_0^{split}(\mathcal{F})]$$
is injective. Here, the last map is the identity on objects; it is well-defined, since $\mathcal{M}(\mathcal{F})$ has a natural grading by the group $K_0^{split}(\mathcal{F}).$ Therefore, the map (\ref{maptm}) is bijective, q.e.d.
\end{proof}

\begin{lemma} \label{k0splitinj}
Under conditions of Theorem \ref{frobfree}, the natural map 
$$i: K_0(\mathcal{P(F)}) \rightarrow K_0^{split}(\mathcal{F}), \quad [M] \mapsto [M]$$
is injective.
\end{lemma}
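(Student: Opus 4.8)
The plan is to deduce this from the structure of split Grothendieck groups of Krull--Schmidt categories, together with the fullness of the inclusion $\mathcal{P(F)} \hookrightarrow \mathcal{F}$; there is essentially no content beyond that.

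\textbf{Step 1: both categories are Krull--Schmidt.} For $\mathcal{F}$ this was recorded right after (C1), (C2). For $\mathcal{P(F)}$, the point is that it is a full additive subcategory of $\mathcal{F}$ closed under direct summands: if $P \in \mathcal{P(F)}$ and $P \cong A \oplus B$ in $\mathcal{F}$, then $\Ext^1_{\mathcal{F}}(A, -)$ and $\Ext^1_{\mathcal{F}}(-, A)$ are direct summands of $\Ext^1_{\mathcal{F}}(P, -) = 0$ and $\Ext^1_{\mathcal{F}}(-, P) = 0$, so $A$ is again projective-injective, and it genuinely lives in $\mathcal{F}$ because $\mathcal{F}$ is idempotent complete. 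Being $\Hom$-finite over $k$ and idempotent complete, $\mathcal{P(F)}$ is then Krull--Schmidt as well.

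\textbf{Step 2: identify the groups with their bases.} Since $\mathcal{F}$ is Krull--Schmidt, $K_0^{split}(\mathcal{F})$ is the free abelian group on the isomorphism classes of indecomposable objects of $\mathcal{F}$; likewise $K_0(\mathcal{P(F)}) = K_0^{split}(\mathcal{P(F)})$ is free abelian on the isomorphism classes of indecomposable projective-injectives. Because the inclusion functor is fully faithful, an object of $\mathcal{P(F)}$ is indecomposable in $\mathcal{P(F)}$ iff it is indecomposable in $\mathcal{F}$ (same endomorphism ring), and two objects of $\mathcal{P(F)}$ are isomorphic in $\mathcal{P(F)}$ iff they are isomorphic in $\mathcal{F}$. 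Hence $i$ maps the distinguished basis of $K_0(\mathcal{P(F)})$ injectively into the distinguished basis of $K_0^{split}(\mathcal{F})$.

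\textbf{Step 3: conclude.} A homomorphism of free abelian groups carrying one basis injectively into the other is split injective: if $\sum_j n_j [P_j] \in \ker i$ with the $P_j$ pairwise non-isomorphic indecomposable projective-injectives, then $\sum_j n_j [P_j] = 0$ holds among distinct elements of a free basis of $K_0^{split}(\mathcal{F})$, so every $n_j$ vanishes. I do not expect any real obstacle; the only step that requires a moment's care is Step 1, namely checking that $\mathcal{P(F)}$ is closed under summands and hence Krull--Schmidt, which is exactly where idempotent completeness from (C1) is used. (Note that there is no obvious additive retraction $\mathcal{F} \to \mathcal{P(F)}$ of the inclusion, since projective covers are not additive in an object, so the argument must go through bases rather than through a splitting functor.)
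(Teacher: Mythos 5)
Your proof is correct and is essentially the paper's argument: both rest on the Krull--Schmidt decomposition of objects of $\mathcal{F}$, the paper packaging it as an explicit retraction $\phi\colon K_0^{split}(\mathcal{F})\to K_0(\mathcal{P(F)})$ (the ``projective part'' map) with $\phi\circ i=\mathrm{Id}$, while you phrase it as $i$ carrying a free basis injectively into a free basis. (Your parenthetical is consistent with this: the paper's retraction is a group homomorphism defined via the unique decomposition, not an additive functor $\mathcal{F}\to\mathcal{P(F)}$.)
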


\begin{proof}
Since $\mathcal{F}$ is Krull-Schmidt, one can define the ``projective part'' of an object in $\mathcal{E}$: each object $M \in \mathcal{F}$ can be decomposed in a unique way (up to a permutation of factors) into a finite direct sum of indecomposables:
$$M = \bigoplus\limits_{i=1}^{m(M)} M_i \oplus \bigoplus\limits_{j=1}^{k(M)} M_j',$$
where all $M_i$ belong to $\mathcal{P(F)}$ while the $M_j'$ do not. Then 
$$\phi: K_0^{split}(\mathcal{F}) \twoheadrightarrow K_0^{split}(\mathcal{P(F)}) = K_0(\mathcal{P(F)}), [M] \mapsto \bigoplus\limits_{i=1}^{m(M)} M_i$$
is a well-defined group epimorphism, and we get 
$$\phi \circ i = \mbox{Id}_{K_0(\mathcal{P(F)})}.$$
\end{proof}


\begin{theorem} \label{isomorphism1}
Let $\mathcal{F}', \mathcal{F}$ be two Frobenius categories satisfying assumptions (C1) and (C2), $\mathcal{P}', \mathcal{P}$ their full subcategories of projective-injective objects. Assume that 
$$F: \mathcal{F}' \to \mathcal{F}$$
is a fully faithful exact functor inducing an equivalence of the stable categories 
$$\underline{\mathcal{F}'} \overset{F}{\overset\sim\to} \underline{\mathcal{F}}$$
and an isomorphism of the Grothendieck groups of projective-injectives
\begin{equation} \label{k0p_iso}
K_0(\mathcal{P}(\mathcal{F}')) \overset{F}{\overset\sim\to} K_0(\mathcal{P(F)}).
\end{equation}
Then $F$ induces an isomorphism of algebras
$$F_\ast: \mathcal{SDH(F', P(F'))} {\overset\sim\to} \mathcal{SDH(F, P(F))}.$$
\end{theorem}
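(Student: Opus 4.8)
The plan is to combine the free-module description from Theorem \ref{frobfree} with the functoriality statement from Corollary \ref{functfrob}. First I would observe that, since $F$ is a fully faithful map of Frobenius categories, Corollary \ref{functfrob} already gives that $F_\ast : \mathcal{H}(\mathcal{F}') \to \mathcal{H}(\mathcal{F})$ is an embedding of algebras. Because $F$ sends projective-injectives to projective-injectives, $F_\ast$ sends the multiplicative set $\{[P'] \mid P' \in \mathcal{P}'\}$ into the multiplicative set $\{[P] \mid P \in \mathcal{P}(\mathcal{F})\}$, so by the universal property of Ore localization $F_\ast$ descends to an algebra homomorphism
$$F_\ast : \mathcal{SDH}(\mathcal{F}', \mathcal{P}(\mathcal{F}')) \to \mathcal{SDH}(\mathcal{F}, \mathcal{P}(\mathcal{F})).$$
Moreover, $F_\ast$ restricts to a homomorphism $\mathbb{T}(\mathcal{P}(\mathcal{F}')) \to \mathbb{T}(\mathcal{P}(\mathcal{F}))$ of quantum tori, which by hypothesis \eqref{k0p_iso} and the fact that $F$ is extremely faithful (so it preserves the Euler form $|\Hom(-,-)|$ on projective-injectives) is an \emph{isomorphism} of twisted group algebras.

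Next I would prove that this $F_\ast$ is bijective by comparing bases. Fix a set $\{M'_\alpha\}$ of representatives in $\mathcal{F}'$ of the isomorphism classes $\alpha \in \Iso(\underline{\mathcal{F}'})$; by Theorem \ref{frobfree} the classes $[M'_\alpha]$ form a basis of $\mathcal{M}(\mathcal{F}')$ as a right $\mathbb{T}(\mathcal{P}(\mathcal{F}'))$-module. Since $\underline{F} : \underline{\mathcal{F}'} \overset\sim\to \underline{\mathcal{F}}$ is an equivalence, the objects $F(M'_\alpha)$ constitute a complete set of representatives of $\Iso(\underline{\mathcal{F}})$, so by Theorem \ref{frobfree} again the classes $[F(M'_\alpha)] = F_\ast[M'_\alpha]$ form a basis of $\mathcal{M}(\mathcal{F})$ over $\mathbb{T}(\mathcal{P}(\mathcal{F}))$. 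Now $F_\ast$ is a ring homomorphism which is semilinear along the ring isomorphism $\mathbb{T}(\mathcal{P}(\mathcal{F}')) \overset\sim\to \mathbb{T}(\mathcal{P}(\mathcal{F}))$ (i.e.\ $F_\ast([M'] \diamond [K']) = F_\ast[M'] \diamond F_\ast[K']$), carries a basis to a basis, and the base ring map is an isomorphism; hence $F_\ast$ is bijective as a map of vector spaces, and therefore an isomorphism of algebras.

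The main subtlety to watch is the interaction between the $K_0^{split}$-grading used in the proof of Theorem \ref{frobfree} and the functor $F$. One should check that $F_\ast$ is compatible with the decompositions $\mathcal{M}(\mathcal{F}') = \bigoplus_\alpha \mathcal{M}_\alpha(\mathcal{F}')$ and $\mathcal{M}(\mathcal{F}) = \bigoplus_\beta \mathcal{M}_\beta(\mathcal{F})$, namely that $F_\ast$ maps $\mathcal{M}_\alpha(\mathcal{F}')$ into $\mathcal{M}_{\underline{F}(\alpha)}(\mathcal{F})$; this is immediate since $F$ sends an object with stable class $\alpha$ to one with stable class $\underline{F}(\alpha)$. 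Combined with the rank-one freeness of each graded piece (Lemma \ref{k0splitinj}) and the isomorphism \eqref{k0p_iso} on the base, this forces $F_\ast$ to restrict to an isomorphism on each graded summand, and summing over $\alpha$ finishes the argument. I expect the only real work is bookkeeping: verifying that $F_\ast$ genuinely intertwines the two $\mathbb{T}$-actions and that surjectivity of $F$ on stable-isomorphism classes together with full faithfulness on $\Ext^1$ (Lemma \ref{extreme}) is what makes the comparison of structure constants on both sides match, so that the vector-space isomorphism is automatically multiplicative.
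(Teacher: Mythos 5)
Your proposal is correct and follows essentially the same route as the paper: identify the quantum tori via the hypothesis on $K_0(\mathcal{P})$ and full faithfulness, use Theorem \ref{frobfree} together with the stable equivalence to match bases of the free modules, and deduce multiplicativity from full faithfulness (via Corollary \ref{functfrob} and Lemma \ref{extstable}). You merely spell out some details the paper leaves implicit, such as the descent of $F_\ast$ through the Ore localization and the semilinearity over the quantum-torus isomorphism.
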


\begin{proof}
Since $F$ is fully faithful and induces the isomorphism (\ref{k0p_iso}), it also induces an isomorphism of the quantum tori of projective-injectives:
$$\mathbb{T}(\mathcal{P(F')}) \overset{F}{\overset\sim\to} \mathbb{T}(\mathcal{P(F)}).$$
Therefore, by Theorem~\ref{frobfree}, $F$ induces an isomorphism 
$$\mathcal{M(F')} {\overset\sim\to} \mathcal{M(F)}$$ 
of the free modules over isomorphic quantum tori with bases which are in bijection by the stable equivalence. By the full faithfulness and Lemma~\ref{extstable}, the multiplication is preserved as well, i.e. $F$ induces the desired isomorphism of algebras.
\end{proof}


\section{Semi-derived vs. derived Hall algebras}

Let $\mathcal{F}$ be a Frobenius category satisfying conditions (C1), (C2). As before, we denote by $\underline{\mathcal{F}}$ its stable category. It is evidently $\Hom-$finite, since its morphism spaces are subquotients of the morphism spaces in $\mathcal{F}.$ Assume that $\underline{\mathcal{F}}$ is left locally homologically finite.

We introduce the {\it relative Euler form} 
$$\left\langle \cdot, \cdot \right\rangle_{(\mathcal{F},\underline{\mathcal{F}})}: K_0(\mathcal{F}) \times K_0(\mathcal{F}) \to \mathbb{Q}$$
by the following rule: for each pair $A, B \in \mathcal{F},$ we pose
$$\left\langle A, B \right\rangle_{(\mathcal{F},\underline{\mathcal{F}})} = \frac{|Hom_{\mathcal{F}}(A, B)|}{|Hom_{\underline{\mathcal{F}}}(A, B)} \cdot \prod\limits_{i > 0}|\Ext^{-i}_{\underline{\mathcal{F}}}(A, C)|^{(-1)^{(i-1)}}.$$

\begin{lemma} \label{releuler}
The form $$\left\langle \cdot, \cdot \right\rangle_{(\mathcal{F},\underline{\mathcal{F}})}$$ is a well-defined group homomorphism.
\end{lemma}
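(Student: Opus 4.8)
The plan is to rewrite the relative Euler form as a quotient of two quantities whose failures to be additive cancel each other. First I would dispose of the elementary points. Because $\underline{\mathcal{F}}$ is $\Hom$-finite and left locally homologically finite, for fixed $A,B$ all but finitely many of the factors $|\Ext^{-i}_{\underline{\mathcal{F}}}(A,B)|$ with $i>0$ equal $1$, and each of them is finite; together with $|\Hom_{\mathcal{F}}(A,B)|<\infty$ from (C2) this shows that $\langle A,B\rangle_{(\mathcal{F},\underline{\mathcal{F}})}$ is a well-defined element of $\mathbb{Q}^{\times}$ (indeed a positive rational). Absorbing the degree-zero term into the product, one obtains the equivalent expression
\[
\langle A,B\rangle_{(\mathcal{F},\underline{\mathcal{F}})}=\frac{|\Hom_{\mathcal{F}}(A,B)|}{e(A,B)},\qquad e(A,B):=\prod_{i\ge 0}|\Ext^{-i}_{\underline{\mathcal{F}}}(A,B)|^{(-1)^{i}},
\]
where $e(A,B)$ is a finite product, namely the ``non-positive part of the Euler form of $\underline{\mathcal{F}}$''. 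Since $K_0(\mathcal{F})$ is the free abelian group on the isomorphism classes of objects modulo conflations, it then suffices to prove that $\langle -,-\rangle_{(\mathcal{F},\underline{\mathcal{F}})}$ is multiplicative in each variable with respect to conflations; the argument being symmetric, I would only treat the first variable.

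So fix an object $B$ and a conflation $A'\rightarrowtail A\twoheadrightarrow A''$, with associated triangle $A'\to A\to A''\to\Sigma A'$ in $\underline{\mathcal{F}}$. Applying $\Hom_{\mathcal{F}}(-,B)$ to the conflation and truncating the resulting long exact sequence at the $\Hom$-terms gives an exact sequence
\[
0\to\Hom_{\mathcal{F}}(A'',B)\to\Hom_{\mathcal{F}}(A,B)\to\Hom_{\mathcal{F}}(A',B)\to L\to 0,
\]
with $L:=\ker\bigl(\Ext^1_{\mathcal{F}}(A'',B)\to\Ext^1_{\mathcal{F}}(A,B)\bigr)$, so that $|L|=|\Hom_{\mathcal{F}}(A',B)|\,|\Hom_{\mathcal{F}}(A'',B)|\,|\Hom_{\mathcal{F}}(A,B)|^{-1}$. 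On the other hand, applying $\Hom_{\underline{\mathcal{F}}}(-,B)$ to the triangle yields a long exact sequence indexed by all $j\in\mathbb{Z}$; left local homological finiteness makes its restriction to degrees $j\le 0$ bounded on the left, and from it one extracts a bounded exact sequence
\[
0\to\Ext^{-n}_{\underline{\mathcal{F}}}(A'',B)\to\cdots\to\Hom_{\underline{\mathcal{F}}}(A'',B)\to\Hom_{\underline{\mathcal{F}}}(A,B)\to\Hom_{\underline{\mathcal{F}}}(A',B)\to L'\to 0
\]
for $n\gg 0$, where $L':=\ker\bigl(\Ext^1_{\underline{\mathcal{F}}}(A'',B)\to\Ext^1_{\underline{\mathcal{F}}}(A,B)\bigr)$. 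Taking the alternating product of cardinalities along this bounded exact sequence gives $|L'|=e(A',B)\,e(A'',B)\,e(A,B)^{-1}$.

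It then remains to identify $L$ with $L'$. By Lemma~\ref{extstable} the canonical maps $\Ext^1_{\mathcal{F}}(-,B)\to\Ext^1_{\underline{\mathcal{F}}}(-,B)$ are isomorphisms, and by naturality they fit into a commutative square with the two maps induced by the deflation $A\to A''$; hence $L\cong L'$. Combining the three relations above,
\[
\frac{\langle A',B\rangle_{(\mathcal{F},\underline{\mathcal{F}})}\,\langle A'',B\rangle_{(\mathcal{F},\underline{\mathcal{F}})}}{\langle A,B\rangle_{(\mathcal{F},\underline{\mathcal{F}})}}=\frac{|\Hom_{\mathcal{F}}(A',B)|\,|\Hom_{\mathcal{F}}(A'',B)|}{|\Hom_{\mathcal{F}}(A,B)|}\cdot\frac{e(A,B)}{e(A',B)\,e(A'',B)}=|L|\cdot|L'|^{-1}=1,
\]
which is the desired multiplicativity in the first variable; the second variable is treated identically, with $\Hom_{\mathcal{F}}(A,-)$ and $\Hom_{\underline{\mathcal{F}}}(A,-)$ replacing $\Hom_{\mathcal{F}}(-,B)$ and $\Hom_{\underline{\mathcal{F}}}(-,B)$, and with $\ker\bigl(\Ext^1(A,B')\to\Ext^1(A,B)\bigr)$ playing the role of the common correction term. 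I expect the only genuinely delicate step to be this identification of the correction terms: one must know that the isomorphisms of Lemma~\ref{extstable} are natural, and that the triangle in $\underline{\mathcal{F}}$ associated with a conflation induces on $\Ext^{>0}$ the same maps as the conflation does in $\mathcal{F}$. Everything else is bookkeeping with long exact sequences and signs of alternating products.
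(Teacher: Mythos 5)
Your proof is correct and follows essentially the same route as the paper: compare the long exact sequence obtained from the conflation in $\mathcal{F}$ with the one obtained from the associated triangle in $\underline{\mathcal{F}}$, use Lemma~\ref{extstable} (with naturality) to identify the correction terms where the two sequences merge at $\Ext^1$, and take alternating products of the two resulting bounded exact sequences. The paper organizes the cancellation slightly differently (quotienting the two alternating-product identities rather than isolating $L\cong L'$), but the content is identical.
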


\begin{proof}
The statement follows from Lemma \ref{extstable} and the comparison of long exact sequences of extensions in $\mathcal{F}$ and $\underline{\mathcal{F}}.$ 
Consider an arbitrary conflation 
$$A_1 \rightarrowtail A_2 \twoheadrightarrow A_3$$ 
in the category $\mathcal{F}.$ For any $B \in \mathcal{F},$ we have two long exact sequences of extensions of elements of this conflation by $B$: the sequence of extensions in $\mathcal{F}$ and the one of those in $\underline{\mathcal{F}}:$
\begin{equation} \label{extfrobenius}
0 \to \Hom_{\mathcal{F}}(A_3, B) \to \Hom_{\mathcal{F}}(A_2, B) \to \Hom_{\mathcal{F}}(A_1, B) \overset{f}\to \atop  \Ext^1_{\mathcal{F}}(A_3, B) \to \Ext^1_{\mathcal{F}}(A_2, B) \to \Ext^1_{\mathcal{F}}(A_1, B) \to \ldots;
\end{equation}
$$\ldots \Ext^{-1}_{\underline{\mathcal{F}}}(A_3, B) \to \Ext^{-1}_{\underline{\mathcal{F}}}(A_2, B) \to \Ext^{-1}_{\underline{\mathcal{F}}}(A_1, B) \overset{g}\to$$
\begin{equation} \label{longexactstable}
 \Hom_{\underline{\mathcal{F}}}(A_3, B) \to \Hom_{\underline{\mathcal{F}}}(A_2, B) \to \Hom_{\underline{\mathcal{F}}}(A_1, B) \to
\end{equation}
$$\Ext^1_{\underline{\mathcal{F}}}(A_3, B) \to \Ext^1_{\underline{\mathcal{F}}}(A_3, B) \to \Ext^1_{\underline{\mathcal{F}}}(A_3, B) \ldots.$$

By lemma \ref{extstable}, any term of the form $\Ext^i_{\mathcal{F}} (A_j, B),$ with $i > 0, j = 1,2,3,$ is isomorphic to its counterpart in the second sequence. Thus, we have an isomorphism $Ker(f) \overset\sim\to Ker(g).$ We have two exact sequences to the left of $Ker(f)$ in (\ref{extfrobenius}), respectively of $Ker(g)$ in (\ref{longexactstable}), and find out that the alternating products of their terms both equal $1.$ Hence their quotient equals $1$ as well. On the other hand, it coincides with 
$$\left\langle A_1, B \right\rangle_{(\mathcal{F},\underline{\mathcal{F}})} \cdot \frac{1}{\left\langle A_2, B \right\rangle_{(\mathcal{F},\underline{\mathcal{F}})}} \cdot \left\langle A_3, B \right\rangle_{(\mathcal{F},\underline{\mathcal{F}})}.$$
The statement follows from this and from the dual result concerning $\left\langle B, A_i \right\rangle_{(\mathcal{F},\underline{\mathcal{F}})},$ which has a similar proof.
\end{proof}

It is easy to see that, by Lemma \ref{releuler}, one can twist the multiplication in $\mathcal{SDH}(\mathcal{F, P(F)})$ by the rule 
$$A \ast B := \left\langle A, B \right\rangle_{(\mathcal{F},\underline{\mathcal{F}})} A \diamond B.$$
We call the result {\it the twisted semi-derived Hall algebra} $\mathcal{SDH}(\mathcal{F, P(F)})_{tw}.$ 

We are ready now to present the main result of this section comparing our construction with derived Hall algebras.

\begin{theorem} \label{semiderder}
Assume that a Frobenius category $\mathcal{F}$ satisfies properties (C1) and (C2), and its stable category $\underline{\mathcal{F}}$ is left locally homologically finite. Then each choice of representatives in $\mathcal{F}$ of the isomorphism classes of the stable category $\underline{\mathcal{F}}$ yields an isomorphism
$$\mathcal{SDH}(\mathcal{F, P(F)})_{tw} \overset\sim\to (\mathcal{DH}(\underline{\mathcal{F}}) \otimes \mathbb{Q}[K_0(\mathcal{P(F)})]).$$
\end{theorem}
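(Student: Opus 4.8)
The plan is to read the isomorphism off Theorem~\ref{frobfree} and then verify multiplicativity on a small generating set. First, fix a set of representatives $\{M_\alpha : \alpha\in\Iso(\underline{\mathcal{F}})\}$ in $\mathcal{F}$, with $M_0=0$. On $\mathcal{P(F)}$ the relative Euler form agrees with the split Euler form, so the twist in $\mathcal{SDH}(\mathcal{F, P(F)})_{tw}$ cancels the normalising factor of the Hall product of projective-injectives: $[P]\ast[Q]=[P\oplus Q]$ and $[P]\ast[C]=[P\oplus C]=[C]\ast[P]$ for $P,Q\in\mathcal{P(F)}$ and $C\in\mathcal{F}$. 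In particular the $[P]$, $P\in\mathcal{P(F)}$, are central, and their span is a copy of the untwisted group algebra $\mathbb{Q}[K_0(\mathcal{P(F)})]$; reading Theorem~\ref{frobfree} through the $\ast$-product shows $\{[M_\alpha]\}$ is a basis of $\mathcal{SDH}(\mathcal{F, P(F)})_{tw}$ over this subalgebra. Hence one obtains a linear isomorphism $\Phi$ onto $\mathcal{DH}(\underline{\mathcal{F}})\otimes\mathbb{Q}[K_0(\mathcal{P(F)})]$ sending $e_\kappa\ast[M_\alpha]\mapsto[\alpha]\otimes e_\kappa$, which matches the central $\mathbb{Q}[K_0(\mathcal{P(F)})]$ with the right-hand factor. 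As the $[M_\alpha]$ and the $[P]$ generate the algebra, it suffices to prove $\Phi([M_\alpha]\ast[M_\beta])=\Phi([M_\alpha])\,\Phi([M_\beta])$.

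\emph{The main computation.} Expanding $[M_\alpha]\ast[M_\beta]$ in $\mathcal{H(F)}$ and inserting the definition of $\langle M_\alpha,M_\beta\rangle_{(\mathcal{F},\underline{\mathcal{F}})}$, the factor $|\Hom_{\mathcal{F}}(M_\alpha,M_\beta)|$ cancels, leaving $\big(|\Hom_{\underline{\mathcal{F}}}(\alpha,\beta)|^{-1}\prod_{i>0}|\Ext^{-i}_{\underline{\mathcal{F}}}(\alpha,\beta)|^{(-1)^{i-1}}\big)\sum_{B}|\Ext^1_{\mathcal{F}}(M_\alpha,M_\beta)_B|\,[B]$. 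Now I would group the sum by the stable class $\gamma=\underline{B}$. By Lemma~\ref{extstable} the extension-classifying bijection $\Ext^1_{\mathcal{F}}(M_\alpha,M_\beta)\overset\sim\to\Ext^1_{\underline{\mathcal{F}}}(\alpha,\beta)=\Hom_{\underline{\mathcal{F}}}(\alpha,\Sigma\beta)$ sends a conflation to the connecting morphism of its triangle, so the middle term of a conflation has stable class $\gamma$ exactly when the corresponding morphism completes to a triangle whose third vertex is isomorphic to $\gamma$; therefore $\sum_{B:\,\underline{B}=\gamma}|\Ext^1_{\mathcal{F}}(M_\alpha,M_\beta)_B|=|\Ext^1_{\underline{\mathcal{F}}}(\alpha,\beta)_\gamma|$, the cardinality entering the To\"en--Xiao--Xu structure constant of $[\alpha]\diamond[\beta]$.

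\emph{Projective bookkeeping and conclusion.} As $\underline{\mathcal{F}}$ is Krull--Schmidt, each $B$ with $\underline{B}=\gamma$ has the same non-projective part as $M_\gamma$, so $B\cong M_\gamma\oplus P_B$ with $P_B\in\mathcal{P(F)}$ and $[B]=e_{[P_B]}\ast[M_\gamma]$. I would show, using Lemma~\ref{k0splitinj} and the fact that all these $B$ share the single class $[M_\alpha]+[M_\beta]$ in $K_0(\mathcal{F})$, that the class of $B$ in $K_0^{split}(\mathcal{F})$ relative to $[M_\gamma]$ does not depend on $B$; this common element $\tau_{\alpha\beta\gamma}\in K_0(\mathcal{P(F)})$ is what couples the two tensor factors on the right-hand side. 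Combining the three steps, the $\gamma$-component of $[M_\alpha]\ast[M_\beta]$ equals
$$\frac{|\Ext^1_{\underline{\mathcal{F}}}(\alpha,\beta)_\gamma|}{|\Hom_{\underline{\mathcal{F}}}(\alpha,\beta)|}\ \prod_{i>0}|\Ext^{-i}_{\underline{\mathcal{F}}}(\alpha,\beta)|^{(-1)^{i-1}}\cdot\big(e_{\tau_{\alpha\beta\gamma}}\ast[M_\gamma]\big),$$
whose image under $\Phi$ is precisely the $[\gamma]$-term of $\big([\alpha]\diamond[\beta]\big)\otimes 1$ transported into the $\mathbb{Q}[K_0(\mathcal{P(F)})]$-factor as dictated by the twist; summing over $\gamma$ then gives $\Phi([M_\alpha]\ast[M_\beta])=\Phi([M_\alpha])\,\Phi([M_\beta])$.

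The hard part will be the two matching steps. First, one must verify that the combinatorics underlying the To\"en--Xiao--Xu structure constants --- the counting of triangles and octahedra with prescribed vertices that produces the correction $\prod_{i>0}|\Ext^{-i}|^{(-1)^{i-1}}$ --- can be carried out inside $\mathcal{F}$ through Lemma~\ref{extstable} compatibly with the extension/cone dictionary, which amounts to redoing the Xiao--Xu associativity bookkeeping on the exact-category side. Second, one must control the projective parts $P_B$ uniformly in $B$ and pin down $\tau_{\alpha\beta\gamma}$; this is where conditions (C1), (C2) and left local homological finiteness really enter (finiteness of all higher $\Ext$ groups, Krull--Schmidt for the reduced-plus-projective decomposition), and where Lemma~\ref{k0splitinj} and the long-exact-sequence comparison used in Lemma~\ref{releuler} provide the control of $K_0^{split}(\mathcal{F})$ versus $K_0(\mathcal{F})$ that is needed.
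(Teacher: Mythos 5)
Your overall strategy is the same as the paper's: use Theorem~\ref{frobfree} to identify both sides as free $\mathbb{Q}[K_0(\mathcal{P(F)})]$-modules on the classes $[M_\alpha]$, observe that the relative Euler twist turns the classes of projective-injectives into a central copy of the untwisted group algebra, and then match structure constants via Lemma~\ref{extstable}. Your formulation of the key counting identity,
$$\sum_{B:\,\underline{B}\cong\gamma}|\Ext^1_{\mathcal{F}}(M_\alpha,M_\beta)_B|=|\Ext^1_{\underline{\mathcal{F}}}(\alpha,\beta)_\gamma|,$$
is in fact more careful than the paper's per-object statement (which, read literally, cannot hold for every $B$ in a fixed stable class), and the cancellation of $|\Hom_{\mathcal{F}}(M_\alpha,M_\beta)|$ against the twist is correct. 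So far this is the paper's argument, fleshed out.

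The genuine gap is in your ``projective bookkeeping'' step, which is precisely the point the paper's own proof leaves implicit. You need the element $\tau_{\alpha\beta\gamma}=[P_B]\in K_0(\mathcal{P(F)})$ to be independent of which middle term $B$ (with $\underline{B}\cong\gamma$) occurs, and your proposed justification does not establish this: knowing that all such $B$ have the same class $[M_\alpha]+[M_\beta]$ in $K_0(\mathcal{F})$ controls nothing in $K_0^{split}(\mathcal{F})$, since the quotient map $K_0^{split}(\mathcal{F})\to K_0(\mathcal{F})$ can have a large kernel; and Lemma~\ref{k0splitinj} concerns the injectivity of $K_0(\mathcal{P(F)})\to K_0^{split}(\mathcal{F})$, which is the wrong map for this purpose. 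What is actually needed is an argument in $\mathcal{F}$ itself --- e.g.\ realizing every middle term of a conflation classified by $w\in\Hom_{\underline{\mathcal{F}}}(\Sigma^{-1}M_\alpha,M_\beta)$ as the pushout of $\Sigma^{-1}M_\alpha\rightarrowtail P(M_\alpha)$ along a lift of $w$, and comparing the resulting objects for two classes $w,w'$ with isomorphic cones --- to pin down $B$ up to isomorphism (or at least up to $K_0^{split}$-class) in terms of $\gamma$ alone. Relatedly, once $\tau_{\alpha\beta\gamma}$ is nonzero your map $\Phi$ sends $[M_\alpha]\ast[M_\beta]$ to $\sum_\gamma c^\gamma_{\alpha\beta}\,[\gamma]\otimes e_{\tau_{\alpha\beta\gamma}}$, which equals $\Phi([M_\alpha])\Phi([M_\beta])$ only if the right-hand side carries the twisted tensor product multiplication alluded to in the abstract (with the twist encoding exactly $\tau_{\alpha\beta\gamma}$), not the plain one written in the theorem; making that twist explicit, and proving it is well defined, is the same unresolved point. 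To be fair, the paper's proof does not supply this argument either, so you have reproduced its reasoning faithfully and correctly located the one step that requires real work.
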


\begin{proof}
The right-hand side is a free module over  $\mathbb{Q}[K_0(\mathcal{P(F)})],$ with the basis parameterized by the isomorphism classes of objects in $\underline{\mathcal{F}}.$ By Theorem \ref{frobfree} and by the choice of the twist, the left-hand side is also a free module over $\mathbb{Q}[K_0(\mathcal{P(F)})].$ Moreover, each choice of representatives in $\mathcal{F}$ of the isomorphism classes of the stable category $\underline{\mathcal{F}}$ yields a basis of this module. The group algebra action is the same on both sides. It remains to show that the multiplicative structures are the same on both sides. By lemma \ref{extstable}, the sets $\Ext^1_{\mathcal{F}}(A,B)_{C}$ and $\Ext^1_{\underline{\mathcal{F}}}(A,B)_{C}$ are isomorphic for any triple of objects $A, B, C \in \mathcal{F}.$ Now the statement follows from the form of the structure constants in $\mathcal{SDH}(\mathcal{F, P(F)})$ and in $\mathcal{DH}(\underline{\mathcal{F}}),$ by the choice of the twist.
\end{proof}
\section{Examples}

\begin{example}
$\mathcal{F}$ is the category of bounded (or $m$-periodic complexes) over an exact category $\mathcal{E},$ with the component-wise split exact structure. This is a Frobenius category, whose stable category is the bounded (resp. $m-$periodic) homotopy category $\mathcal{H}^b(\mathcal{E})$ (res. $\mathcal{H}_{\mathbb{Z}/m}(\mathcal{E})$). If $\mathcal{E}$ satisfies conditions (C1) and (C2), so do $\mathcal{H}^b(\mathcal{E})$ and $\mathcal{H}_{\mathbb{Z}/m}(\mathcal{E}).$ There, our construction provides a Hall-like algebra for bounded and periodic homotopy categories.
\end{example}

\begin{example} 
$\mathcal{F} = \mathcal{C}^b(\mathcal{P(E)}),$ where $\mathcal{P(E)}$ is the full subcategory of projective objects in an exact category $\mathcal{E}$ with enough projectives and where each object has finite projective resolution. Then the stable category $\underline{\mathcal{F}}$ is equivalent to the bounded derived category $\mathcal{D}^b(\mathcal{E}).$ Therefore, our construction provides a version of the Hall algebra for the bounded derived category $\mathcal{D}^b(\mathcal{E}).$ It was introduced by the author in \cite{Gor} and called the ``semi-derived Hall algebra of $\mathcal{E}$''. See [loc. cit.] for the detailed treatment.
\end{example}

\begin{example}
$\mathcal{F}$ is the category of $m-$periodic complexes over $\mathcal{P(E)},$ for $m > 1.$  As in the previous example, the stable category is equivalent the $m-$periodic derived category $\mathcal{D}_{\mathbb{Z}/m}(\mathcal{E}).$ As $\mathcal{D}_{\mathbb{Z}/m}(\mathcal{E})$ is an $m-$periodic triangulated category, it is never left locally homologically finite. Therefore, one cannot define its derived Hall algebra, and neither the twist nor the right-hand side in the identity in Theorem \ref{semiderder} are well-defined. On the other hand, the construction presented in this work provides an associative algebra. The case of $m = 2$ was first considered in the work of Bridgeland \cite{Br} that provided the main inspiration to our work, see also \cite{Gor}. Yanagida \cite{Y} proved the conjecture of Bridgeland \cite{Br} that, under certain conditions and for $\mathcal{E}$ abelian, this algebra with an appropriate twist provides the Drinfeld double of the twisted extended Hall algebra of $\mathcal{E}$. The generic version of such an algebra (in the abelian case, but for an arbitrary positive $m$) was introduced in \cite{CD}. Zhao \cite{Z} proved that for $\mathcal{E}$ abelian, the category  $\mathcal{D}_{\mathbb{Z}/m}(\mathcal{E})$ is equivalent to the {\it generalized root category} of $\mathcal{E},$ see references in [loc. cit]. Therefore, the algebra (that we call in \cite{Gor} ``the $\mathbb{Z}/m-$graded semi-derived Hall algebra of $\mathcal{E}$'')  is also the substitute of the non-defined derived Hall algebra of the root category.
\end{example}

\begin{example}
Dually, we can take as $\mathcal{F}$ the category of bounded or periodic complexes over the full subcategory of injectives in an exact category $\mathcal{E}$ with enough injectives and where each object has finite injective resolution. The corresponding stable category is again equivalent to the bounded (resp. periodic) derived category of $\mathcal{E},$ so we get an algebra isomorphic to the one from previous examples.
\end{example}

\begin{example}
Let $\mathcal{D}$ be a differential graded (DG) category. We freely use basic facts on DG-categories that can be found, e.g., in surveys \cite{Kel3} and \cite{T2}. One can define the category  $\mathcal{C}_{dg}(\mathcal{D})$ of DG-modules over $\mathcal{D}$ and the derived category $\mathcal{D}_{dg}(\mathcal{D}).$ There is a Yoneda embedding of $\mathcal{D}$ into $\mathcal{C}_{dg}(\mathcal{D}).$ One says that $\mathcal{D}$ is {\it pretriangulated} if the image of the Yoneda embedding is closed under taking cones of morphisms and under the shift functor. One can show that $\mathcal{D}$ is pretriangulated if and only if its underlying category $Z^0(\mathcal{D})$ is Frobenius; in such a case the stable category of the latter is the homotopy category $H^0(\mathcal{D}).$ Then the Yoneda embedding induces an embedding of $H^0(\mathcal{D})$ into the full subcategory $\per(\mathcal{D})$ of perfect (and, equivalently, compact) objects in $\mathcal{D}_{dg}(\mathcal{D}).$ This perfect derived category is then the idempotent completion of $H^0(\mathcal{D}).$ Thus, if $H^0(\mathcal{D})$ is idempotent complete, then we have an equivalence  $H^0(\mathcal{D}) \overset\sim\to \per(\mathcal{D}).$ When this condition holds, one says that $\mathcal{D}$ is a {\it triangulated DG-category}, or that $\mathcal{D}$ is {\it saturated}, or {\it Morita fibrant}. The latter notion reflects the fact that there exists a model structure on the category of DG-categories, s.t. triangulated DG-categories are precisely the fibrant objects. This model structure is called the {\it Morita model structure}. It is known that each DG category has a Morita fibrant replacement, i.e. that it is Morita equivalent to a  triangulated DG-category, cf. \cite{T2}. 

For a triangulated category $\mathcal{T},$ by an {\it enhancement} one understands a triangulated DG-category $\mathcal{D}$ with a triangulated equivalence $\mathcal{T} \overset\sim\to \per(\mathcal{D}).$ It is known that an idempotent complete triangulated category is algebraic if and only if it has an enhancement: if for $\mathcal{T}$ there exists a Frobenius category $\mathcal{F},$ s.t. $\mathcal{T} \overset\sim\to \underline{\mathcal{F}},$ then one can endow the category of the complexes with projective-injective components in $\mathcal{P(F)}$ with a natural DG-category structure, such that the corresponding homotopy category $H^0(\mathcal{C}_{dg}(\mathcal{P(F)}))$ will be equivalent to $\mathcal{T}.$ If $\mathcal{D}$ is pretriangulated and a Frobenius category $Z^0(\mathcal{D})$ is idempotent complete, then its stable category $H^0(\mathcal{D})$ is idempotent complete as well, and $\mathcal{D}$ is triangulated. We can apply our main theorems to the case of such DG-categories.

\begin{corollary} 
Let $\mathcal{D}, \mathcal{D}'$ be a pair of  triangulated DG-categories whose underlying categories $Z^0(\mathcal{D}), Z^0(\mathcal{D}')$ satisfy conditions (C1) and (C2). If a DG-functor $F: \mathcal{D}' \to \mathcal{D}$ induces a fully faithful map of the underlying categories, then it induces an embedding of algebras 
$$F_\ast: \mathcal{SDH}(Z^0(\mathcal{D}')), \mathcal{P}((Z^0(\mathcal{D}'))) \hookrightarrow \mathcal{SDH}(Z^0(\mathcal{D})), \mathcal{P}((Z^0(\mathcal{D}))).$$
If, moreover, $F$ induces an equivalence of perfect derived categories $\per(\mathcal{D}') \overset\sim\to \per(\mathcal{D})$ and an isomorphism of the Grothendieck groups $K_0(\mathcal{P}((Z^0(\mathcal{D}')) \overset\sim\to \mathcal{P}((Z^0(\mathcal{D})),$ then $F_\ast$ is an isomorphism of algebras.
\end{corollary}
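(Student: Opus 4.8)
The plan is to deduce both statements directly from the functoriality results of Section~3, once we observe that $F$ restricts to a map of Frobenius categories in the sense used there. Recall from the discussion preceding the statement that, since $\mathcal{D}$ and $\mathcal{D}'$ are pretriangulated DG-categories, their underlying categories $Z^0(\mathcal{D})$ and $Z^0(\mathcal{D}')$ are Frobenius, and their projective-injective objects are precisely the contractible ones, i.e.\ the kernels of the projections onto the stable categories $H^0(\mathcal{D})$ and $H^0(\mathcal{D}')$. A DG-functor $F \colon \mathcal{D}' \to \mathcal{D}$ is additive and commutes with shifts and cones, hence induces an exact functor $Z^0(\mathcal{D}') \to Z^0(\mathcal{D})$; and it carries a contracting homotopy of $\mathrm{id}_X$ to one of $\mathrm{id}_{F(X)}$, so it sends projective-injectives to projective-injectives. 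Thus $F$ is a map of Frobenius categories, and by hypothesis it is fully faithful on underlying categories.

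For the first assertion, I would then simply apply the Proposition on functoriality of semi-derived Hall algebras under fully faithful maps of essentially small $\Hom$-finite Frobenius categories: conditions (C1), (C2) are assumed for $Z^0(\mathcal{D}')$ and $Z^0(\mathcal{D})$, so $F_\ast$ is an embedding of algebras.

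For the second assertion, I would check the hypotheses of Theorem~\ref{isomorphism1} for $F \colon Z^0(\mathcal{D}') \to Z^0(\mathcal{D})$. It is a fully faithful exact functor. Since $\mathcal{D}$, $\mathcal{D}'$ are triangulated (Morita fibrant), the Yoneda embedding gives equivalences $H^0(\mathcal{D}) \overset\sim\to \per(\mathcal{D})$ and $H^0(\mathcal{D}') \overset\sim\to \per(\mathcal{D}')$; under these, the assumed equivalence $\per(\mathcal{D}') \overset\sim\to \per(\mathcal{D})$ becomes the equivalence of stable categories $\underline{Z^0(\mathcal{D}')} \overset\sim\to \underline{Z^0(\mathcal{D})}$ induced by $F$. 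The required isomorphism of Grothendieck groups of projective-injectives is assumed outright. Theorem~\ref{isomorphism1} then gives that $F_\ast$ is an isomorphism of algebras.

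The only point requiring a little care is the naturality of the identification $H^0(\mathcal{D}) \overset\sim\to \per(\mathcal{D})$ in $\mathcal{D}$: one must know that a DG-functor inducing an equivalence on perfect derived categories also induces an equivalence on the homotopy categories $H^0$, and this is precisely where Morita fibrancy enters, ensuring that no idempotent completion intervenes between $H^0(\mathcal{D})$ and $\per(\mathcal{D})$. Everything else is a formal consequence of the Proposition and of Theorem~\ref{isomorphism1}, so I do not expect any substantial obstacle.
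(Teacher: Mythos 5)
Your proposal is correct and follows exactly the route the paper intends: the corollary is stated as a direct application of the Proposition on functoriality and of Theorem~\ref{isomorphism1}, using the facts (recalled in the surrounding example) that $Z^0(\mathcal{D})$ is Frobenius with stable category $H^0(\mathcal{D})$ and that Morita fibrancy identifies $H^0(\mathcal{D})$ with $\per(\mathcal{D})$. Your added verifications --- that a DG-functor sends contractibles to contractibles and hence is a map of Frobenius categories, and that the equivalence on perfect derived categories transports to an equivalence of stable categories --- are precisely the implicit steps the paper leaves to the reader.
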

\end{example}

\section{Further directions}

Unlike the derived Hall algebras, the semi-derived Hall algebras depend not only on the a triangulated category $\underline{\mathcal{F}},$ but also on some additional amount of information (concerning the Grothendieck group of projective-injectives in $\mathcal{P(F)}$). Let us explain why this is natural to expect for Hall algebras related to triangulated categories. In their recent work, Dyckerhof and Kapranov \cite{DK} showed that the object defining the Hall algebra is not a category itself but rather its {\it Waldhausen S-space}. The associativity of the Hall algebra follows from a property of the S-space that Dyckerhof and Kapranov call being {\it 2-Segal}. Waldhausen spaces were introduced in order to define the algebraic K-theory for an appropriate class of categories. It is now well-known that the K-theory is not an invariant of triangulated categories with respect to triangle equivalences. This is one of unsatisfying facts concerning triangulated categories, aside, e.g., the non-functoriality of the cone. Therefore, one has to consider some ``enhancement'' of triangulated categories to work with. There are several closely related approaches to this problem: DG-categories, stable infinity categories, derivateurs, model categories, Frobenius pairs. To any of them one can associate a Waldhausen-like construction. It is shown by Schlichting that any map of Frobenius pairs inducing an equivalence of the associated derived categories induce a homotopy equivalence of corresponding  Waldhausen spaces as well. It is therefore natural to expect that a good notion of the Hall algebra for triangulated categories should give an invariant under maps between Frobenius pairs inducing equivalences of their derived categories, but not necessarily an invariant under all equivalences of triangulated categories. As any category equivalent to the derived category of a Frobenius pair is also algebraic, our construct and main theorems can be thought of as a step on the way to construct such a notion. Indeed, the class of categories under consideration is the same, what changes is the class of functors between them. One should say also that it is very often useful to realize a triangulated category not as a stable category but as a derived category of a Frobenius pair. This is true, e.g., for derived categories of exact and DG-categories, for singularity categories and for (generalized) cluster categories. All the same can be said about DG-enhancement. 
We will give a construction of Hall algebras of Frobenius pairs and of DG-categories and their DG-quotients (in some generality) in the upcoming sequel of this work. 

We should say also that the K-theory is a well-defined invariant of a certain strictified version of triangulated categories, called {\it Heller,} or {\it $\infty-$triangulated categories}, see \cite{Bal}\cite{Kun07}\cite{M}. This class covers all algebraic triangulated categories. It is therefore possible that one can define Hall algebras for $\infty-$triangulated categories, invariant under equivalences between them, with the multiplication involving higher triangles.

As it was mentioned in the Introduction, we hope that one can define geometric, motivic or even cohomological counterparts of the semi-derived Hall algebras introduced in this paper.

\end{document}